\documentclass[11pt,leqno]{siamltex}
\usepackage[top=1in, bottom=1in, left=1in, right=1in]{geometry}
\usepackage{amsfonts}
\usepackage{amsmath}
\usepackage{amssymb}
\usepackage{bm}
\usepackage{subfig}
\usepackage{graphicx}
\usepackage{algorithm}
\usepackage{algorithmic}
\usepackage{multirow}
\usepackage{xcolor}

\newcommand{\Bx}{\bm{x}}
\newcommand{\Be}{\bm{e}}
\newcommand{\Bb}{\bm{b}}
\newcommand{\BW}{\bm{W}}

\newcommand{\ts}{\text{s}}

\newcommand{\tn}{\text{n}}
\newcommand{\tb}{\text{b}}
\newcommand{\tS}{\text{S}}
\newcommand{\tL}{\text{L}}
\newtheorem{remark}{Remark}[section]

\title{SelectNet: Self-Paced Learning for High-dimensional Partial Differential Equations}

\author{Yiqi Gu
\vspace{0.1in}\\
Department of Mathematics, National University of Singapore, 10 Lower Kent Ridge Road, Singapore, 119076 ({\tt matguy@nus.edu.sg})
  \vspace{0.1in}\\
Haizhao Yang\footnote{Corresponding author.}
  \vspace{0.1in}\\
  Department of Mathematics, Purdue University, West Lafayette, IN 47907,  USA ({\tt haizhao@purdue.edu})
\vspace{0.1in}\\
Chao Zhou
\vspace{0.1in}\\
Department of Mathematics, National University of Singapore, 10 Lower Kent Ridge Road, Singapore, 119076 ({\tt matzc@nus.edu.sg})
}

\begin{document}
\maketitle
\begin{abstract}
The least squares method with deep neural networks as function parametrization has been applied to solve certain high-dimensional partial differential equations (PDEs) successfully; however, its convergence is slow and might not be guaranteed even within a simple class of PDEs. To improve the convergence of the network-based least squares model, we introduce a novel self-paced learning framework, SelectNet, which quantifies the difficulty of training samples, treats samples equally in the early stage of training
, and slowly explores more challenging samples, e.g., samples with larger residual errors, mimicking the human cognitive process for more efficient learning. In particular, a selection network and the PDE solution network are trained simultaneously; the selection network adaptively weighting the training samples of the solution network achieving the goal of self-paced learning. Numerical examples indicate that the proposed SelectNet model outperforms existing models on the convergence speed and the convergence robustness, especially for low-regularity solutions.
\end{abstract}

\begin{keywords}
High-Dimensional PDEs; Deep Neural Networks; Self-Paced Learning; Selected Sampling; Least Square Method; Convergence.
\end{keywords}

\begin{AMS}
65M75; 65N75; 62M45;
\end{AMS}

\pagestyle{myheadings}
\thispagestyle{plain}
\markboth{SELECTNET FOR SOLVING HIGH-DIMENSIONAL PDES}{SELECTNET FOR SOLVING HIGH-DIMENSIONAL PDES}

\section{Introduction}
High-dimensional partial differential equations (PDEs) are important tools in physical, financial, and biological models \cite{Lee2002,Ehrhardt2008,Yserentant2005,Gaikwad2009,Wales2003}. However, developing numerical methods for high-dimensional PDEs has been challenging due to the curse of dimensionality in the discretization of the problem. For example, in traditional methods such as finite difference methods and finite element methods, $O(N^d)$ degree of freedom is required for a $d$-dimensional problem if we set $N$ grid points or basis functions in each direction to achieve $O(\frac{1}{N})$ accuracy. Even if $d$ becomes moderately large, the exponential growth $N^d$ in the dimension $d$ makes traditional methods immediately computationally intractable.

Recent research of the approximation theory of deep neural networks (DNNs) shows that deep network approximation is a powerful tool for mesh-free function parametrization. The research on the approximation theory of neural networks traces back to the pioneering work \cite{Cybenko1989,Hornik1989,Barron1993} on the universal approximation of shallow networks with sigmoid activation functions. The recent research focus was on the approximation rate of DNNs for various function spaces in terms of the number of network parameters showing that deep networks are more powerful than shallow networks in approximation efficiency. For example, smooth functions \cite{NIPS2017_7203,DBLP:journals/corr/LiangS16,yarotsky2017,DBLP:journals/corr/abs-1807-00297,Montanelli2019_3,suzuki2018adaptivity,Weinan2019,Weinan2019APE,E2019_2}, piecewise smooth functions \cite{PETERSEN2018296}, band-limited functions \cite{Montanelli2019}, continuous functions \cite{yarotsky18a,SHEN201974,Shen2019}. The reader is referred to \cite{Shen2019} for the explicit characterization of the approximation error for networks with an arbitrary width and depth.

In particular, deep network approximation can lessen or overcome the curse of dimensionality under certain circumstances, making it an attractive tool for solving high-dimensional problems. For functions admitting an integral representation with a one-dimensional integral kernel, {no curse of dimensionality in the approximation rate} can be shown via establishing the connection of network approximation with the Monte Carlo sampling or equivalently the law of large numbers \cite{Barron1993,Weinan2019,Weinan2019APE,E2019_2,Montanelli2019}. Based on the Kolmogorov-Arnold superposition theorem, for general continuous functions, \cite{MAIOROV199981,GUL} showed that three-layer neural networks with advanced activation functions can avoid the curse of dimensionality and the total number of parameters required is only $O(d)$; \cite{Montanelli2019_2} proves that deep ReLU network approximation can lessen the curse of dimensionality, if target functions are restricted to a space related to the constructive proof of the Kolmogorov-Arnold superposition theorem in \cite{braun}. If the approximation error is only concerned on a low-dimensional manifold, there is no curse of dimensionality for deep network approximation {in terms of the approximation error} \cite{Chui2018,Cai2018,Shen2019}. Finally, there is also extensive research showing that deep network approximation can overcome the curse of dimensionality when they are applied to approximation certain PDE solutions, e.g. \cite{HJKN19_814,hutzenthaler2020}.

As an efficient function parametrization tool, neural networks have been applied to solve PDEs via various approaches. Early work in \cite{Lee1990} applies neural networks to approximate PDE solutions defined on grid points. Later in \cite{Dissanayake1994,Lagaris1998}, DNNs are employed to approximate solutions in the whole domain, and PDEs are solved by minimizing the discrete residual error in the $L^2$-norm at prescribed collocation points. DNNs coupled with boundary governing terms by design can satisfy boundary conditions \cite{Malek2006}. Nevertheless, designing boundary governing terms is usually difficult for complex geometry. Another approach to enforcing boundary conditions is to add boundary errors to the loss function as a penalized term and minimize it as well as the PDE residual error \cite{Gobovic1994,Lagaris2000}. The second technique is in the same spirit of least squares methods in finite element methods and is more convenient in implementation. Therefore, it has been widely utilized for PDEs with complex domains. However, network computation was usually expensive, limiting the applications of network-based PDE solvers. Thanks to the development of GPU-based parallel computing over the last two decades, which greatly boosts the network computation, network-based PDE solvers were revisited recently and have become a popular tool, especially for high-dimensional problems \cite{E2017,Ritz,Han2018,Khoo2017SolvingPP,Sirignano2018,Berg2018,Zang2019,Li2019,Beck2019, Hutzenthaler2018,Hutzenthaler2019,Cai2019,RAISSI2019686,doi:10.1063/1.5110439}. Nevertheless, most network-based PDE solvers suffer from robustness issues: their convergence is slow and might not be guaranteed even within a simple class of PDEs.

To ease the issue above, we introduce a novel self-paced learning framework, SelectNet, to adaptively choose training samples in the least squares model. Self-paced learning \cite{Kumar2010} is a recently raised learning technique that can choose a part of the training samples for actual training over time. Specifically, for a training data set with $n$ samplings, self-paced learning uses a vector $v\in\{0,1\}^n$ to indicate whether each training sample should be included in the current training stage. The philosophy of self-paced learning is to simulate human beings' learning style, which tends to learn easier aspects of a learning task first and deal with more complicated samples later. Based on self-paced learning, a novel technique for selected sampling is put forward, which uses a selection neural network instead of the 0-1 selection vector $v$. Hence, it learns to avoid redundant training information and speeds up the convergence of learning outcomes. This idea is further improved in \cite{Jiang2017} by introducing a DNN to select training data for image classification. Among similar works, a state-of-the-art algorithm named SelectNet is proposed in \cite{Liu2019} for image classification, especially for imbalanced data problems. Based on the observation that samples near the singularity of the PDE solution are rare compared to samples from the regular part, we extend the SelectNet \cite{Liu2019} to network-based least squares models, especially for PDE solutions with certain irregularity. As we shall see later, numerical results show that the proposed model is competitive with the traditional (basic) least squares model for analytic solutions, and it outperforms others for low-regularity solutions, in the aspect of the convergence speed. It is worth noting that our proposed SelectNet model is essentially tuning the weights of training points to realize the adaptive sampling. Another approach is to change the distribution of training points, such as the residual-based adaptive refinement method \cite{Kaufmann2010}.

The organization of this paper is as follows. In Section 2, we introduce the least squares methods and formulate the corresponding optimization model. In Section 3, we present the SelectNet model in detail. In Section 4, we put forward the error estimates of the basic and SelectNet models. In Section 5, we discuss the network implementation in the proposed model. In Section 6, we present ample numerical experiments for various equations to validate our model. We conclude with some remarks in the final section.

\section{Least Squares Methods for PDEs}
In this work, we aim at solving the following (initial) boundary value problems, giving a bounded domain $\Omega\subset\mathbb{R}^d$:
\begin{itemize}
  \item elliptic equations
  \begin{equation}\label{01_1}
  \begin{split}
  &\mathcal{D}_x u(x)=f(x),\text{~in~}\Omega,\\
  &\mathcal{B}_x u(x)=g_0(x),\text{~on~}\partial\Omega;
  \end{split}
  \end{equation}
  \item parabolic equations
  \begin{equation}\label{01_2}
  \begin{split}
  &\frac{\partial u(x,t)}{\partial t}-\mathcal{D}_x u(x,t)=f(x,t),\text{~in~}\Omega\times(0,T),\\
  &\mathcal{B}_x u(x,t)=g_0(x,t),\text{~on~}\partial\Omega\times(0,T),\\
  &u(x,0)=h_0(x),\text{~in~}\Omega;
  \end{split}
  \end{equation}
  \item hyperbolic equations
  \begin{equation}\label{01_3}
  \begin{split}
  &\frac{\partial^2 u(x,t)}{\partial t^2}-\mathcal{D}_x u(x,t)=f(x,t),\text{~in~}\Omega\times(0,T),\\
  &\mathcal{B}_x u(x,t)=g_0(x,t),\text{~on~}\partial\Omega\times(0,T),\\
  &u(x,0)=h_0(x),\quad\frac{\partial u(x,0)}{\partial t}=h_1(x)\text{~in~}\Omega;
  \end{split}
  \end{equation}
\end{itemize}
where $u$ is the solution function; $f$, $g_0$, $h_0$, $h_1$ are given data functions; $\mathcal{D}_x$ is a spatial differential operator concerning the derivatives of $x$; $\mathcal{B}_x$ is a boundary operator specifying a Dirichlet, Neumann or Robin boundary condition.

In this method, the temporal variable $t$ will be regarded as an extra spatial coordinate, and it will not be dealt with differently from $x$. For simplicity, the PDEs in \eqref{01_1}-\eqref{01_3} are unified in the following form
\begin{equation}\label{01}
\begin{split}
&\mathcal{D}u(\Bx)=f(\Bx),\text{~in~}Q,\\
&\mathcal{B}u(\Bx)=g(\Bx),\text{~in~}\Gamma,
\end{split}
\end{equation}
where $\Bx$ includes the spatial variable $x$ and possibly the temporal variable $t$; $\mathcal{D}u=f$ represents a generic PDE; $\mathcal{B}u=g$ represents the governing conditions including the boundary condition and possibly the initial condition; $Q$ and $\Gamma$ are the corresponding domains of the equations.

Now we seek a neural network $u(\Bx;\theta)$ approximating the solution $u(\Bx)$ of the PDE \eqref{01}. Note the residual errors for the PDE and the governing conditions can be written by
\begin{equation}
\mathcal{R}_Q(u(\Bx;\theta)):=\mathcal{D}u(\Bx;\theta)-f(\Bx),\quad\mathcal{R}_\Gamma(u(\Bx;\theta)):=\mathcal{B}u(\Bx;\theta)-g(\Bx).
\end{equation}

One can solve the PDE by searching for the optimal parameters of the network that minimize these residual errors, i.e.
\begin{equation}\label{03}
\underset{\theta}{\min}~\|\mathcal{R}_Q(u(\Bx;\theta))\|_Q^2+\lambda\|\mathcal{R}_\Gamma(u(\Bx;\theta))\|_\Gamma^2,
\end{equation}
where $\|\cdot\|_\ast$ is usually the $L^2$-norm and $\lambda$ is a parameter for weighting the sum, e.g.,
\begin{equation}\label{04}
\underset{\theta}{\min}~\mathbb{E}_{\Bx\in Q}\left[|\mathcal{D}u(\Bx;\theta)-f(\Bx)|^2\right]+\lambda\mathbb{E}_{\Bx\in\Gamma}\left[|\mathcal{B}u(\Bx;\theta)-g(\Bx)|^2\right].
\end{equation}

\section{SelectNet Model}
\label{sec_sn}

The network-based least squares model has been applied to solve certain high-dimensional PDEs successfully. However, its convergence is slow and might not be guaranteed. To ease this issue, we introduce a novel self-paced learning framework, SelectNet, to adaptively choose training samples in the least squares model. The basic philosophy is to mimic the human cognitive process for more efficient learning: learning first from easier examples and slowly exploring more complicated ones. The proposed model is related to selected sampling \cite{Csiba:2018:ISM:3291125.3291152,DBLP:journals/corr/abs-1803-00942}, an important tool of deep learning for computer science applications.  Nevertheless, the effectiveness of selected sampling in scientific computing has not been fully explored yet.

In particular, a selection network $\phi_\ts(\bm{x};\theta_\ts)$ (subscript $\ts$ for ``selection") and the PDE solution network $u(\bm{x};\theta)$ are trained simultaneously; the selection network adaptively weighting the training samples of the solution network achieving the goal of self-paced learning. $\phi_\ts(\bm{x};\theta_\ts)$ is a ``mentor" helping to decide whether a sample $\bm{x}$ is important enough to train the ``student" network $u(\bm{x};\theta)$. The ``mentor" could avoid redundant training information and help to speed up the convergence. This idea is originally from self-paced learning \cite{KPK2010} and is further improved in \cite{Jiang2017} by introducing a DNN to select training data for image classification. Among similar works, a state-of-the-art algorithm named SelectNet was proposed in \cite{Liu2019} for image classification, especially for imbalanced data problem. Based on the observation that samples near the singularity of the PDE solution are rare compared to samples from the regular part, we extend the SelectNet \cite{Liu2019} to network-based least squares models, especially for PDE solutions with certain irregularity.

Originally in image classification, for a training data set $\mathcal{D} = \{(\bm{x}_i, {y}_i))\}_{i=1}^n$, self-paced learning uses a vector $\bm{v}\in \left\{0,1\right\}^n$ to indicate whether or not each training sample should be included in the current training stage ($v_i = 1$ if the $i$th sample is included in the current iteration). The overall target function including $\bm{v}$ is
\begin{equation}\label{spl}
{\min}_{\theta, \bm{v} \in \{0,1\}^n} \sum_{i=1}^n v_i\mathcal{L}(y_i, \phi(\bm{x}_i;\theta)) - \lambda \sum_{i=1}^n v_i,
\end{equation}
where $\mathcal{L}(y_i, \phi(\bm{x}_i; \theta))$ denotes the loss function of a DNN $\phi(\bm{x}_i; \theta)$ for classifying a sample $\bm{x_i}$ to $y_i$. When this model is relaxed to $\bm{v}\in [0,1]^n$ and the alternative convex search is applied to solve the relaxed optimization, a straightforward derivation easily reveals a rule for the optimal value for each entry $v_i^{(t)}$ in the $t$-th iteration as
\begin{equation}
v_i^{(t)} =   1,  \textrm{ if }\mathcal{L}(y_i, \phi(\bm{x}_i;\theta^{(t)})) < \lambda,\qquad \text{ and }\qquad v_i^{(t)} =
  0,  \textrm{ otherwise.}
\end{equation}
A sample with a smaller loss than the threshold $\lambda$ is treated as an ``easy" sample and will be selected in training. {Let us assume that the variables $\bm{v}$ and $\theta$ are trained alternatively.} When computing $\theta^{(t+1)}$ with a fixed $\bm{v}^{(t)}$, the classifier is trained only on the selected ``easy" samples. {When computing $\bm{v}^{(t+1)}$ with a fixed $\theta^{(t+1)}$, the vector $\bm{v}$ help to adjust the training samples to be used in computing $\theta^{(t+2)}$. It was shown by extensive numerical experiments that} this mechanism helps to reduce the generalization error for image classification when the training data distribution is usually different from the test data distribution \cite{KPK2010}. {In \cite{Jiang2017,Liu2019}, a selection network $\phi_s(\bm{x};\theta_s)\in[0,1]$ is trained to select training samples instead of using the binary vector $\bm{v}$ with the following loss function:
\begin{equation}\label{eqn:contopt}
{\min}_{{\theta}, {\theta}_s } \sum_{i=1}^n \phi_s(\bm{x}_i;{\theta}_s) \mathcal{L}(y_i, \phi(\bm{x}_i;{\theta})) - \lambda \sum_{i=1}^n \phi_s(\bm{x}_i;{\theta}_s).
\end{equation}
The introduction of the selection network has mainly three advantages. First, it changes the discrete optimization problem in \eqref{spl} to a continuous optimization problem in \eqref{eqn:contopt} that is much easier to solve. Besides, the selection network with values in $[0,1]$ can more adaptive adjust the weights to each sample. Finally, the number of parameters in the selection network can be much smaller than the size of $\bm{v}$, since usually a small selection network is good enough to decide weights roughly.
}

{The self-paced idea can also be applied to the preceding least squares model for solving PDEs. One naive way is to rewrite the optimization \eqref{04} as
\begin{equation}\label{02}
\underset{\theta}{\min}~ \frac{1}{N_1}\sum_{i=1}^{N_1}v'_i|\mathcal{D}u(\Bx^1_i;\theta)-f(\Bx^1_i)|^2+\frac{\lambda}{N_2}\sum_{i=1}^{N_2}v''_j|\mathcal{B}u(\Bx^2_i;\theta)-g(\Bx^2_i)|^2,
\end{equation}
where $\{\Bx^1_i\}_{i=1}^{N_1}\subset\Omega$ and $\{\Bx^2_i\}_{i=1}^{N_2}\subset\partial\Omega$ are random samples; $v'_i$ and $v''_i$ are adaptive binary weights denoting if the samples are selected or not in the loss. Similar adaptive sampling techniques can be found in \cite{Nakamura2019,E2020}. Solving PDEs using deep learning is different from conventional supervised learning, where sample data are fixed without the flexibility to be arbitrary in the problem domain. The training and testing data distributions are the same, and there is no limitation for sampling when we solve PDEs. Therefore, appropriately selecting training data and assigning weights $\bm{v}'$ and $\bm{v}''$ in each optimization iteration can better facilitate the convergence of deep learning to the true PDE solution.

Intuitively, a good strategy is to first choose ``easy" samples to quickly identify a rough PDE solution and then use more ``difficult" samples with large residual errors to refine the PDE solution. For example, in the early stage of the training, random samples are uniformly drawn in the PDE domain; in the latter stage of the training, we can select samples with almost the highest residual errors for training. However, this naive selection strategy might be too greedy: large residual errors usually occur where the PDE solution is irregular (e.g., near low regularity points), resulting in selected training samples gathering around these ``difficult" points with few samples in other regions. Note that deep neural networks are functions globally supported in the PDE domain. Training with samples restricted in a small area may lead to large test errors in other areas. In our experiments, we observe that this naive selection strategy applied to \eqref{02} even works worse than the basic model \eqref{04} (See the numerical example in Section \ref{Sec_compare_binary}).

Borrowing the idea in \cite{Jiang2017,Liu2019}, we introduce two neural networks, $\phi_\ts'(\Bx;\theta_\ts')$ and $\phi_\ts''(\Bx;\theta_\ts'')$, named as the selection network for the PDE residual error and the boundary condition error, respectively, to replace $\bm{v}'$ and $\bm{v}''$ in \eqref{02}. The introduction of selection networks admits three main advantages over the naive binary weights, as discussed previously for the models in \eqref{spl} and \eqref{eqn:contopt}. According to the discussion in the last paragraph, the selection networks $\phi_\ts'(\Bx;\theta_\ts')$ and $\phi_\ts''(\Bx;\theta_\ts'')$ should satisfy the following requirements. 1) As weight functions, they are required to be non-negative and bounded. 2) They should not have a strong bias for weighting samples in the early stage of training. 3) They prefer higher weights for samples with larger point-wise residual errors in the latter stage of training.

For the first requirement, $\phi_\ts'(\Bx;\theta_\ts')$ and $\phi_\ts''(\Bx;\theta_\ts'')$ are enforced to satisfy
\begin{gather}
m_0<\phi_\ts'(\Bx;\theta_\ts')<M_0,\quad\forall\Bx\in Q\text{~and~}\forall\theta_\ts',\label{06_1}\\
m_0<\phi_\ts''(\Bx;\theta_\ts'')<M_0,\quad\forall\Bx\in \Gamma\text{~and~}\forall\theta_\ts'',\label{06_2}
\end{gather}
where $M_0>1>m_0\geq0$ are prescribed constants. Note the conditions \eqref{06_1}-\eqref{06_2} hold automatically if the last layer of activation functions of $\phi_\ts'(\Bx;\theta_\ts')$ and $\phi_\ts''(\Bx;\theta_\ts'')$ is bounded (e.g., using a tanh or sigmoid activation function) and the network output is properly re-scaled and shifted as we shall discuss later in the next section. Therefore, the corresponding weighted least squares method is formulated by
\begin{equation}\label{11}
\mathbb{E}_{x\in Q}\left[\phi_\ts'(\Bx;\theta_\ts')|\mathcal{D}u(\Bx;\theta)-f(\Bx)|^2\right]\\
+\lambda\mathbb{E}_{x\in\Gamma}\left[\phi_\ts''(\Bx;\theta_\ts'')|\mathcal{B}u(\Bx;\theta)-g(\Bx)|^2\right].
\end{equation}

For the second requirement, when the selection networks are randomly initialized with zero bias and random weights with a zero mean and a small variance, the selection networks are random functions close to a constant. Therefore, the selection networks have no bias in weighting samples in the early stage of training.

The third requirement can also be satisfied. Based on the principle that higher weights should be added to samples with larger point-wise residual errors, we can train $\phi_\ts'(\Bx;\theta_\ts')$ and $\phi_\ts''(\Bx;\theta_\ts'')$ via
\begin{equation}\label{16}
\underset{\theta_\ts',\theta_\ts''}{\max}~\mathbb{E}_{x\in Q}\left[\phi_\ts'(\Bx;\theta_\ts')|\mathcal{D}u(\Bx;\theta)-f(\Bx)|^2\right]\\
+\lambda\mathbb{E}_{x\in\Gamma}\left[\phi_\ts''(\Bx;\theta_\ts'')|\mathcal{B}u(\Bx;\theta)-g(\Bx)|^2\right]
\end{equation}
subject to the normalization conditions,
\begin{equation}\label{05}
\frac{1}{|Q|}\int_Q\phi_\ts'(\Bx;\theta_\ts')d\Bx=1,\quad\frac{1}{|\Gamma|}\int_\Gamma\phi_\ts''(\Bx;\theta_\ts'')d\Bx=1.
\end{equation}
Note in \eqref{16}, to achieve the maximum of the loss function, $\phi_\ts'(\Bx;\theta_\ts')$ tends to take larger values where $|\mathcal{D}u(\Bx;\theta)-f(\Bx)|$ is larger, and take smaller values elsewhere. Also, $\phi_\ts'(\Bx;\theta_\ts')$ will not take large values everywhere since it is normalized by \eqref{05}. The same mechanism is also true for $\phi_\ts''(\Bx;\theta_\ts'')$. In the latter stage of training, $\phi_\ts'(\Bx;\theta_\ts')$ and $\phi_\ts''(\Bx;\theta_\ts'')$ have been optimized by the maximization problem above to choose ``difficult" samples and, hence, the third requirement above is satisfied.

For simplicity, we can combine \eqref{16} and \eqref{05} as the following penalized optimization
\begin{multline}\label{17}
\underset{\theta_\ts',\theta_\ts''}{\max}~\mathbb{E}_{x\in Q}\left[\phi_\ts'(\Bx;\theta_\ts')|\mathcal{D}u(\Bx;\theta)-f(\Bx)|^2\right]
+\lambda\mathbb{E}_{x\in\Gamma}\left[\phi_\ts''(\Bx;\theta_\ts'')|\mathcal{B}u(\Bx;\theta)-g(\Bx)|^2\right]\\
-\varepsilon^{-1}\left[\left(\frac{1}{|Q|}\int_Q\phi_\ts'(\Bx;\theta_\ts')d\Bx -1\right)^2+\left(\frac{1}{|\Gamma|}\int_{\Gamma}\phi_\ts''(\Bx;\theta_\ts'')d\Bx-1\right)^2\right],
\end{multline}
where $\varepsilon>0$ is a small penalty constant. When $\phi_\ts'(\Bx;\theta_\ts')$ and $\phi_\ts''(\Bx;\theta_\ts'')$ are fixed, we can train the solution network $u(\Bx;\theta)$ by minimizing \eqref{17}, i.e.,
\begin{multline}\label{07}
\underset{\theta}{\min}~\underset{\theta_\ts',\theta_\ts''}{\max}~\mathbb{E}_{x\in Q}\left[\phi_\ts'(\Bx;\theta_\ts')|\mathcal{D}u(\Bx;\theta)-f(\Bx)|^2\right]\\
+\lambda\mathbb{E}_{x\in\Gamma}\left[\phi_\ts''(\Bx;\theta_\ts'')|\mathcal{B}u(\Bx;\theta)-g(\Bx)|^2\right]\\
-\varepsilon^{-1}\left[\left(\frac{1}{|Q|}\int_Q\phi_\ts'(\Bx;\theta_\ts')d\Bx -1\right)^2+\left(\frac{1}{|\Gamma|}\int_{\Gamma}\phi_\ts''(\Bx;\theta_\ts'')d\Bx-1\right)^2\right],
\end{multline}
which is the final model in the SelectNet method.

\begin{remark}
An alternate way to penalize the selection networks is to divide the residual terms in \eqref{16} by the norms of the selection networks. Namely, we solve
\begin{multline}\label{37}
\underset{\theta}{\min}~\underset{\theta_\ts',\theta_\ts''}{\max}~\|\phi_\ts'\|_{\Omega}^{-1}\mathbb{E}_{x\in Q}\left[\phi_\ts'(\Bx;\theta_\ts')|\mathcal{D}u(\Bx;\theta)-f(\Bx)|^2\right]\\
+\lambda\|\phi_\ts''\|_{\Gamma}^{-1}\mathbb{E}_{x\in\Gamma}\left[\phi_\ts''(\Bx;\theta_\ts'')|\mathcal{B}u(\Bx;\theta)-g(\Bx)|^2\right].
\end{multline}
However, in practice, the results of \eqref{37} are sensitive to the types of norms and hyperparameters; hence \eqref{37} is more challenging to obtain good numerical results than the formulation \eqref{07}.
\end{remark}

Although the introduction of SelectNet is motivated by self-paced learning in image classification, surprisingly, SelectNet can also be understood via conventional mathematical analysis. The square root of the non-negative selection networks can also be understood as the test function in the weak form of conventional PDE solvers. In the SelectNet, we apply the idea of test functions to both the PDE and the boundary condition, e.g., hoping to identify $u(\Bx;\theta)$ ensuring the following two equalities for all non-negative test functions:
\[
\left(\sqrt{\phi_\ts'(\Bx;\theta_\ts')},\mathcal{D}u(\Bx;\theta)\right)_Q=\left(\sqrt{\phi_\ts'(\Bx;\theta_\ts')}, f(\Bx)\right)_Q
\]
with $\left(\cdot,\cdot\right)_Q$ as the inner product of $L^2(Q)$ and
\[
\left(\sqrt{\phi_\ts''(\Bx;\theta_\ts'')},\mathcal{B}u(\Bx;\theta)\right)_\Gamma=\left(\sqrt{\phi_\ts''(\Bx;\theta_\ts'')}, g(\Bx)\right)_\Gamma
\]
with $\left(\cdot,\cdot\right)_\Gamma$ as the inner product of $L^2(\Gamma)$. Conventional methods apply test functions for the PDE only and the test functions are not necessarily non-negative. In the SelectNet, the integration by part is not applied so as to let the test function play a role of weighting, while conventional methods use the integration by part to weaken the regularity requirement of the PDE solution. Only a single test function is used in SelectNet with a maximum requirement to guarantee that the solution of the min-max problem is the solution of the original problem (see Theorem \ref{thm} later), while conventional methods use sufficiently many test functions that can form a set of basis functions in the discrete test function space. The idea of using test functions in deep learning was also used in \cite{Zang2019}, where the test function was used in a weak form with integration by part. The idea of using a min-max optimization problem instead of the minimization problem to solve PDEs has been studied for many decades, e.g. \cite{Friedrichs}. Maximizing over all possible test functions can obtain the best test function that amplifies the residual error the most, which can better help the minimization problem to identify the PDE solution. {When an optimization algorithm is applied to solve the min-max problem, the optimization dynamic consists of a solution dynamic that converges to the PDE solution and a test dynamic that provide a sequence of test functions to characterize the error of the numerical solution at each iteration. The training dynamic of the selection network in SelectNet approximates the test function dynamic, and the training dynamic of the solution network in SelectNet approximate the solution dynamic.  }
}

\section{Error estimates}
In this section, theoretical analysis are presented to show the solution errors of the basic and SelectNet models are bounded by the loss function (mean square of the residual). Specifically, we will take the elliptic PDE with Neumann boundary condition as an example. The conclusion can be generalized for other well-posed PDEs by similar argument. Consider
\begin{equation}\label{21}
\begin{cases}
-\Delta u+cu=f,\text{~in~}\Omega,\\
\frac{\partial u}{\partial\tn}=g,\text{~on~}\partial\Omega,
\end{cases}
\end{equation}
where $\Omega$ is an open subset of $\mathbb{R}^d$ whose boundary $\partial\Omega$ is $C^1$ smooth; $f\in L^2(\Omega)$, $g\in L^2(\partial\Omega)$, $c(x)\geq\sigma>0$ is a given function in $L^2(\Omega)$.

\begin{theorem}\label{thm}
Suppose the problem \eqref{21} admits a unique solution $u_*$ in $C^1(\overline{\Omega})$. Also, suppose the variational optimization problem
\begin{equation}\label{22}
\underset{u\in\mathcal{N}}{\min}~J(u):=\underset{u\in\mathcal{N}}{\min}\int_\Omega|-\Delta u+cu-f|^2dx+\lambda\int_{\partial\Omega}|\frac{\partial u}{\partial\tn}-g|^2dx,
\end{equation}
has an admissible set $\mathcal{N}\subset C^2(\overline{\Omega})$ containing a feasible solution $u_\tb\in\mathcal{N}$ satisfying
\begin{equation}\label{23}
J(u_\tb)<\delta,
\end{equation}
then
\begin{equation}\label{31}
\|u_\tb-u_*\|_{H^1(\Omega)}\leq c\max(1,\sigma^{-1})\max(1,\lambda^{-\frac{1}{2}})\delta^{\frac{1}{2}},
\end{equation}
where $c>0$ is a constant only depending on $d$ and $\Omega$. Furthermore, let $\mathcal{S}'$ be a subset of $\{\phi\in C(\Omega):\phi>0\}$ which contains $\phi(x)\equiv1$ for all $x\in\Omega$; let $\mathcal{S}''$ be a subset of $\{\phi\in C(\partial\Omega):\phi>0\}$ which contains $\phi(x)\equiv1$ for all $x\in\partial\Omega$. Suppose the variational optimization problem
\begin{multline}\label{24}
\underset{u\in\mathcal{N}}{\min}~J_{\mathcal{S}',\mathcal{S}''}(u):=\underset{u\in\mathcal{N}}{\min}\underset{\phi'\in\mathcal{S}',\phi''\in\mathcal{S}''}{\max}\int_\Omega\phi'|-\Delta u+cu-f|^2dx+\lambda\int_{\partial\Omega}\phi''|\frac{\partial u}{\partial\tn}-g|^2dx\\
-\varepsilon^{-1}\left[\left(\frac{1}{|\Omega|}\int_\Omega\phi'dx -1\right)^2+\left(\frac{1}{|\partial\Omega|}\int_{\partial\Omega}\phi''dx-1\right)^2\right],
\end{multline}
has a feasible solution $u_\ts\in\mathcal{N}$ satisfying
\begin{equation}\label{25}
J_{\mathcal{S}',\mathcal{S}''}(u_\ts)<\delta,
\end{equation}
then
\begin{equation}\label{33}
\|u_\ts-u_*\|_{H^1(\Omega)}\leq c\max(1,\sigma^{-1})\max(1,\lambda^{-\frac{1}{2}})\delta^{\frac{1}{2}}.
\end{equation}
\end{theorem}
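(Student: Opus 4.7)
The plan is to handle the two assertions in tandem: for the basic bound I will run a standard energy estimate on the difference $w := u_\tb - u_*$, and for the min-max version I will reduce to the basic bound by plugging the constant test function into the inner maximization.

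First, introduce the residuals $r_1 := -\Delta u_\tb + c u_\tb - f$ on $\Omega$ and $r_2 := \partial u_\tb/\partial\tn - g$ on $\partial\Omega$, so that by the definition of $J$
\[
\|r_1\|_{L^2(\Omega)}^2 + \lambda\|r_2\|_{L^2(\partial\Omega)}^2 = J(u_\tb) < \delta,
\]
whence $\|r_1\|_{L^2(\Omega)} \le \delta^{1/2}$ and $\|r_2\|_{L^2(\partial\Omega)} \le \lambda^{-1/2}\delta^{1/2}$. Since $u_\tb \in C^2(\overline{\Omega})$, integration by parts against any $v \in H^1(\Omega)$ gives
\[
\int_\Omega \nabla u_\tb\cdot\nabla v + c\,u_\tb v\,dx = \int_\Omega (f+r_1)v\,dx + \int_{\partial\Omega} (g+r_2)v\,dS,
\]
and the weak form of \eqref{21} for $u_*$ is the same identity with $r_1=r_2=0$. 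Subtracting and taking $v = w$ yields the energy identity
\[
\int_\Omega |\nabla w|^2 + c\,w^2\,dx = \int_\Omega r_1 w\,dx + \int_{\partial\Omega} r_2 w\,dS.
\]

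Next, because $c \ge \sigma$, the left-hand side dominates $\min(1,\sigma)\|w\|_{H^1(\Omega)}^2$. On the right I would apply Cauchy-Schwarz together with the standard trace inequality $\|w\|_{L^2(\partial\Omega)} \le C_{\text{tr}}\|w\|_{H^1(\Omega)}$, whose constant $C_{\text{tr}}$ depends only on $d$ and $\Omega$ because $\partial\Omega$ is $C^1$. Combining and dividing by $\|w\|_{H^1(\Omega)}$ gives
\[
\|w\|_{H^1(\Omega)} \le \max(1,\sigma^{-1})\bigl(\|r_1\|_{L^2(\Omega)} + C_{\text{tr}}\|r_2\|_{L^2(\partial\Omega)}\bigr) \le (1+C_{\text{tr}})\max(1,\sigma^{-1})\max(1,\lambda^{-1/2})\,\delta^{1/2},
\]
which is exactly \eqref{31} with the announced constant $c = 1+C_{\text{tr}}$.

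For the min-max statement, the key observation is that the constants $\phi' \equiv 1$ and $\phi'' \equiv 1$ belong to $\mathcal{S}'$ and $\mathcal{S}''$ by assumption. Inserting them into the inner maximum in \eqref{24} makes both penalty terms vanish and reduces the remaining expression to exactly $J(u_\ts)$; hence $J_{\mathcal{S}',\mathcal{S}''}(u_\ts) \ge J(u_\ts)$. The hypothesis $J_{\mathcal{S}',\mathcal{S}''}(u_\ts) < \delta$ therefore forces $J(u_\ts) < \delta$, and \eqref{33} follows immediately by applying the first half of the argument to $u_\ts$. The main obstacle is minor: because $u_*$ is assumed only $C^1(\overline{\Omega})$, the energy identity has to be derived via the weak formulation rather than by multiplying strong-form equations pointwise, and one must keep careful track of constants so that they collapse into the announced $\max(1,\sigma^{-1})\max(1,\lambda^{-1/2})$ prefactor.
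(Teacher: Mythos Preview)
Your proposal is correct and follows essentially the same route as the paper: an energy identity for $w=u_\tb-u_*$ obtained by integration by parts, then Cauchy--Schwarz plus the trace inequality for the first bound, and the choice $\phi'\equiv1$, $\phi''\equiv1$ to reduce the min--max statement to the basic one. Your use of the weak formulation (rather than writing $-\Delta v_\tb$ pointwise, as the paper does) is in fact slightly more careful given that $u_*$ is only assumed $C^1(\overline{\Omega})$, and your constant $1+C_{\text{tr}}$ differs harmlessly from the paper's $\sqrt{2}\max(1,C_{\text{tr}})$.
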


\begin{proof}
Let $v_\tb:=u_\tb-u_*$. Starting from the identity
\begin{equation}\label{26}
-\Delta v_\tb+cv_\tb=-\Delta u_\tb+cu_\tb-f,
\end{equation}
we multiply $v_\tb$ to both sides of \eqref{26} and integrate over $\Omega$. Since $v_\tb\in C^1(\overline{\Omega})$, by integration by parts it follows
\begin{equation}\label{27}
\|\nabla v_\tb\|_{L^2(\Omega)}^2+\sigma\|v_\tb\|_{L^2(\Omega)}^2
\leq\int_\Omega(-\Delta u_\tb+cu_\tb-f)v_\tb dx+\int_{\partial\Omega}v_\tb\frac{\partial v_\tb}{\partial\tn}dx.
\end{equation}
Hence, by the Cauchy-Schwarz inequality,
\begin{multline}\label{28}
\min(1,\sigma)\|v_\tb\|_{H^1(\Omega)}^2\leq\|-\Delta u_\tb+cu_\tb-f\|_{L^2(\Omega)}\cdot\|v_\tb\|_{L^2(\Omega)}\\
+\|v_\tb\|_{L^2(\partial\Omega)}\cdot\|\frac{\partial u_\tb}{\partial\tn}-g\|_{L^2(\partial\Omega)}.
\end{multline}
By the trace theorem, $\|v_\tb\|_{L^2(\partial\Omega)}\leq c'\|v_\tb\|_{H^1(\Omega)}$ for some $c'>0$ only depending on $d$ and $\Omega$. Then we have
\begin{multline}\label{29}
\min(1,\sigma)\|v_\tb\|_{H^1(\Omega)}^2\\
\leq\|v_\tb\|_{H^1(\Omega)}\left(\|-\Delta u_\tb+cu_\tb-f\|_{L^2(\Omega)}+c'\|\frac{\partial u_\tb}{\partial\tn}-g\|_{L^2(\partial\Omega)}\right)\\
\leq c''\|v_\tb\|_{H^1(\Omega)}\left(\|-\Delta u_\tb+cu_\tb-f\|_{L^2(\Omega)}^2+\|\frac{\partial u_\tb}{\partial\tn}-g\|_{L^2(\partial\Omega)}^2\right)^\frac{1}{2},
\end{multline}
with $c''=\sqrt{2}\max(1,c')$. Finally, by the hypothesis \eqref{23}, \eqref{31} directly follows from \eqref{29}.

Moreover, by taking $\phi'\equiv1$, $\phi''\equiv1$ we directly have
\begin{equation}
\int_\Omega|-\Delta u+cu-f|^2dx+\lambda\int_{\partial\Omega}|\frac{\partial u}{\partial\tn}-g|^2dx\leq J_{\mathcal{S}',\mathcal{S}''}(u_\ts)<\delta.
\end{equation}
The same estimate for $\|u_\ts-u_*\|_{H^1(\Omega)}$ can be obtained by similar argument.
\end{proof}

By using the triangle inequality, we can conclude the solutions of the basic and SelectNet models are equivalent as long as the loss functions are minimized sufficiently. As an immediate result, we have the following corollary.
\begin{corollary}
Under the hypothesis of Theorem 4.1, we have
\begin{equation}\label{32}
\|u_\tb-u_\ts\|_{H^1(\Omega)}\leq c\max(1,\sigma^{-1})\max(1,\lambda^{-\frac{1}{2}})\delta^{\frac{1}{2}}.
\end{equation}
\end{corollary}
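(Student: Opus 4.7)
The plan is to derive the corollary as an immediate consequence of Theorem~\ref{thm} via the triangle inequality in $H^1(\Omega)$, so no new analytic work is required beyond what the theorem already provides. Theorem~\ref{thm} supplies, under identical hypotheses, the two bounds
$$\|u_\tb-u_*\|_{H^1(\Omega)}\leq c\max(1,\sigma^{-1})\max(1,\lambda^{-\tfrac{1}{2}})\delta^{\tfrac{1}{2}}$$
and
$$\|u_\ts-u_*\|_{H^1(\Omega)}\leq c\max(1,\sigma^{-1})\max(1,\lambda^{-\tfrac{1}{2}})\delta^{\tfrac{1}{2}},$$
with the same constant $c$ depending only on $d$ and $\Omega$. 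The essential content of the corollary is that the basic-model and SelectNet-model feasible solutions agree, in the $H^1$ sense, up to the same order in $\delta$ as their respective distances to $u_*$.

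First I would invoke the triangle inequality in the normed space $H^1(\Omega)$ to write
$$\|u_\tb-u_\ts\|_{H^1(\Omega)}\leq\|u_\tb-u_*\|_{H^1(\Omega)}+\|u_*-u_\ts\|_{H^1(\Omega)}.$$
Then I would substitute the two bounds from Theorem~\ref{thm}, noting that \eqref{23} and \eqref{25} are assumed simultaneously under the phrasing ``under the hypothesis of Theorem~4.1.'' This yields a right-hand side of $2c\max(1,\sigma^{-1})\max(1,\lambda^{-\tfrac{1}{2}})\delta^{\tfrac{1}{2}}$. Absorbing the factor of $2$ into the constant (which, as in the theorem, is understood to depend only on $d$ and $\Omega$) gives exactly \eqref{32}.

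There is no real obstacle here; the proof is one line after the triangle inequality. The only conceptual point worth flagging in the write-up is that the same constant symbol $c$ is reused, now enlarged by a factor of two, so the stated form of the bound is preserved. This is the sense in which the corollary makes precise the remark preceding it, namely that minimizing either loss function sufficiently yields approximations that are indistinguishable in $H^1(\Omega)$ up to the convergence rate $\delta^{1/2}$.
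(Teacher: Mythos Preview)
Your proposal is correct and matches the paper's approach exactly: the paper explicitly remarks that the corollary follows ``by using the triangle inequality'' from the two bounds \eqref{31} and \eqref{33} of Theorem~\ref{thm}, and gives no further proof. Your observation about absorbing the factor of $2$ into the generic constant $c$ is the only bookkeeping needed.
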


\section{Network Implementation}
\subsection{Network Architecture}\label{Sec_architecture}
The proposed framework is independent of the choice of DNNs. Advanced network design may improve the accuracy and convergence of the proposed framework, which would be interesting for future work.

In this paper, feedforward neural networks will be repeatedly applied. Let $\phi(\Bx;\theta)$ denote such a network with an input $\Bx$ and parameters $\theta$, then it is defined recursively as follows:
\begin{equation}\label{10}
\begin{split}
&\Bx^0 = \Bx,\\
&\Bx^{l+1} = \sigma(\BW^l\Bx^l+\Bb^l),\quad l=0,1,\cdots,L-1,\\
&\phi (\Bx;\theta) = \BW^L\Bx^L+\Bb^L,
\end{split}
\end{equation}
where $\sigma$ is an application-dependent nonlinear activation function, and $\theta$ consists of all the weights and biases $\{\BW^l,\Bb^l\}_{l=0}^L$ satisfying
\begin{equation}
\begin{split}
&\BW^0\in\mathbb{R}^{m\times d},\quad\BW^L\in\mathbb{R}^{1\times m},\quad\Bb^L\in\mathbb{R},\\
&\BW^l\in\mathbb{R}^{m\times m},\quad\text{for~}l=1,\cdots,L-1,\\
&\Bb^l\in\mathbb{R}^{m\times 1},\quad\text{for~}l=0,\cdots,L-1.\\
\end{split}
\end{equation}
The number $m$ is called the width of the network and $L$ is called the depth.

For simplicity, we deploy the feedforward neural network with the activation function $\sigma(\Bx)=\sin(\Bx)$ as the solution network that approximates the solution of the PDE. As for the selection network introduced in Section \ref{sec_sn}, since it is required to be bounded in $[m_0,M_0]$, it can be defined via
\begin{equation}
\phi_\ts(\Bx;\theta) = (M_0-m_0)\sigma_\ts(\hat{\phi}(\Bx;\theta))+m_0,
\end{equation}
where $\sigma_\ts(x)=1/(1+\exp(-x))$ is the sigmoidal function, and $\hat{\phi}$ is a generic network, e.g. a feedforward neural network with the ReLU activation $\sigma(\Bx)=\max\{0,\Bx\}$.

\subsection{Special Network for Dirichlet Boundary Conditions}
\label{sec_DB}

In the case of homogeneous Dirichlet boundary conditions, it is worth mentioning a special network design that satisfies the boundary condition automatically as discussed in \cite{Lagaris1998,Berg2018}.

Let us focus on the boundary value problem to introduce this special network structure. It is straightforward to generalize this idea to the case of an initial boundary value problem and we omit this discussion. Assume a homogeneous Dirichlet boundary condition
\begin{equation}
u(\Bx)=0,\quad\text{on}~\partial\Omega,
\end{equation}
then a solution network automatically satisfying the condition above can be constructed by
\begin{equation}
u(\Bx;\theta) = h(\Bx)\hat{u}(\Bx;\theta),
\end{equation}
where $\hat{u}$ is a generic network as in \eqref{10}, and $h$ is a specifically chosen function such as $h=0$ on $\Gamma$.

For example, if $\Omega$ is a $d$-dimensional unit ball, then $u(\Bx;\theta)$ can take the form
\begin{equation}\label{13}
u(\Bx;\theta) = (|\Bx|^2-1)\hat{u}(\Bx;\theta).
\end{equation}
For another example, if $\Omega$ is the $d$-dimensional cube $[-1,1]^d$, then $u(x;\theta)$ can take the form
\begin{equation}
u(\Bx;\theta) = \underset{i=1}{\overset{d}{\prod}}(x_i^2-1)\hat{u}(\Bx;\theta).
\end{equation}
Since the boundary condition $\mathcal{B}u=0$ is always fulfilled, it suffices to solve the min-max problem
\begin{equation}\label{12}
\underset{\theta}{\min}~\underset{\theta_\ts'}{\max}~\mathbb{E}_{x\in Q}\left[\phi_\ts'(\Bx;\theta_\ts')|\mathcal{D}u(\Bx;\theta)-f(\Bx)|^2\right]
-\varepsilon^{-1}\left(\frac{1}{|Q|}\int_Q\phi_\ts'(\Bx;\theta_\ts')d\Bx-1\right)^2
\end{equation}
to identify the best solution network $u(\Bx;\theta)$.

\subsection{Derivatives of Networks}
Note that the evaluation of the optimization problem in \eqref{07} involves the derivative of the network $u(\Bx;\theta)$ in terms of $\Bx$. When the activation function of the network is differentiable,  the network is differentiable and the derivative in terms of $\Bx$ can be evaluated efficiently via the back-propagation algorithm. Note that the network we adopt in this paper is not differentiable. Hence, finite difference method will be utilized to estimate the derivative of networks. For example, for the elliptic operator $\mathcal{D}u:=\nabla\cdot(a(x)\nabla u)$, $\mathcal{D}u(\Bx;\theta)$ can be estimated by the second-order central difference formula
\begin{multline}
\mathcal{D}u(\Bx;\theta)\approx\frac{1}{h^2}\overset{d}{\underset{i=1}{\sum}}a(\Bx+\frac{1}{2}h\Be_i)(u(\Bx+h\Be_i,\theta)-u(\Bx;\theta))\\
-a(\Bx-\frac{1}{2}h\Be_i)(u(\Bx;\theta)-u(\Bx-h\Be_i,\theta)),
\end{multline}
up to an error of $O(dh^2)$. In the experiments (Section \ref{Sec_experiment}), we take $h=10^{-4}$ for all examples with $d$ up to 100. Hence the truncation errors are up to $O(10^{-6})$, which are overwhelmed by the final errors (at least $O(10^{-4})$). This implies the truncation errors from finite difference can be ignored in practice.

Indeed, one can also use the automatic differentiation in TensorFlow or Pytorch based on the explicit formula of networks to evaluate the derivatives in the practical implementation, which brings no truncation errors. However, the computational cost of this approach is high when a second order (or higher) derivative is computed. Hence we choose finite difference method for derivative computation in this paper.

\subsection{Network Training}
Once networks have been set up, the rest is to train the networks to solve the min-max problem in \eqref{07}. The stochastic gradient descent (SGD) method or its variants (e.g., Adam \cite{KB2014}) is an efficient tool to solve this problem numerically. Although the convergence of SGD for the min-max problem is still an active research topic \cite{minmax1,minmax2,minmax3}, empirical success shows that SGD can provide a good approximate solution.

Before completing the algorithm description of SelectNet, let us introduce the key setup of SGD and summarize it in Algorithm \ref{Alg} below. In each training iteration, we first set uniformly distributed training points $\{\Bx^1_i\}_{i=1}^{N_1}\subset Q$  and $\{\Bx^2_i\}_{i=1}^{N_2}\subset\Gamma$ , and define the empirical loss of these training points as
\begin{multline}
J(\theta,\theta_\ts)=\frac{1}{N_1}\overset{N_1}{\underset{i=1}{\sum}}\phi_\ts'(\Bx^1_i;\theta_\ts')|\mathcal{D}u(\Bx^1_i,\theta)-f(\Bx^1_i)|^2\\
+\frac{\lambda}{N_2}\overset{N_2}{\underset{i=1}{\sum}}\phi_\ts''(\Bx^2_i;\theta_\ts'')|\mathcal{B}u(\Bx^2_i,\theta)-g(\Bx^2_i)|^2\\
-\varepsilon^{-1}\left[\left(\frac{1}{N_1}\overset{N_1}{\underset{i=1}{\sum}}\phi_\ts'(\Bx^1_i;\theta_\ts')-1\right)^2+\left(\frac{1}{N_2}\overset{N_2}{\underset{i=1}{\sum}}\phi_\ts''(\Bx^2_i;\theta_\ts'')-1\right)^2\right],
\end{multline}
where $\theta_\ts:=[\theta_\ts',\theta_\ts'']$. Next, $\theta_\ts$ can be updated by the gradient ascent via
\begin{equation}\label{08}
\theta_\ts\leftarrow\theta_\ts+\tau_\ts\nabla_{\theta_\ts} J,
\end{equation}
and $\theta$ can be updated by the gradient descent via
\begin{equation}\label{09}
\theta\leftarrow\theta-\tau\nabla_{\theta} J,
\end{equation}
with step sizes $\tau_\ts$ and $\tau$. Note that training points are randomly renewed in each iteration. In fact, for the same set of training points in each iteration, the updates \eqref{08} and \eqref{09} can be performed $n_1$ and $n_2$ times, respectively.

\begin{algorithm}
\caption{The Least Squares Model with SelectNet}
\label{Alg}
\begin{algorithmic}
\REQUIRE the PDE \eqref{01}
\ENSURE the parameters $\theta$ in the solution network $u(\Bx;\theta)$
\STATE Set parameters $n$, $n_1$, $n_2$ for iterations and parameters $N_1$, $N_2$ for sample sizes
\STATE Initialize $u(\Bx;\theta^{0,0})$ and $\phi_\ts(\Bx;\theta_\ts^{0,0})$
\FOR{$k=1,\cdots,n$}
\STATE Generate uniformly distributed sampling points\\ $\{\Bx^1_i\}_{i=1}^{N_1}\subset Q$  and $\{\Bx^2_i\}_{i=1}^{N_2}\subset\Gamma$
\FOR{$j=1,\cdots,n_1$}
\STATE Update $\theta_\ts^{k-1,j}\leftarrow\theta_\ts^{k-1,j-1}+\tau_\ts^{(k)}\nabla_{\theta_\ts} J(\theta_\ts^{k-1,j-1},\theta^{k-1,0})$
\ENDFOR
\STATE $\theta_\ts^{k,0}\leftarrow\theta_\ts^{k-1,n_1}$
\FOR{$j=1,\cdots,n_2$}
\STATE Update $\theta^{k-1,j}\leftarrow\theta^{k-1,j-1}-\tau^{(k)}\nabla_{\theta} J(\theta_\ts^{k,0},\theta^{k-1,j-1})$
\ENDFOR
\STATE $\theta^{k,0}\leftarrow\theta^{k-1,n_2}$
\IF{Stopping criteria is satisfied}
\STATE Return $\theta=\theta^{k,0}$
\ENDIF
\ENDFOR
\end{algorithmic}
\end{algorithm}

\section{Numerical Experiments}\label{Sec_experiment}
In this section, the proposed SelectNet model is tested on several PDE examples, including elliptic/parabolic and linear/nonlinear high-dimensional problems. Other network-based methods are also implemented for comparison. For all methods, we choose the feedforward architecture with activation $\sigma(x)=\max(x^3,0)$ for the solution network. Additionally, for SelectNet, we choose feedforward architecture with ReLU activation for the selection network. AdamGrad \cite{Duchi2011} is employed to solve the optimization problems, with learning rates
\begin{equation}
\tau_\ts^{(k)}=10^{-4},
\end{equation}
for the selection network, and
\begin{equation}\label{35}
\tau^{(k)}=10^{-3-3j/1000},\text{~if~}n^{(j)}<k\leq n^{(j+1)},\quad\forall j=0,\cdots,1000,
\end{equation}
for the solution network, where $0=n^{(0)}<\cdots<n^{(1000)}=n$ are equidistant segments of total iterations. Other parameters used in the model and algorithm are listed in Table \ref{Tab_parameters}. Unless otherwise specified, in all examples, we set $N_1=10000$, $N_2=10000$, $n=20000$, $n_1=1$, $\lambda=1$, $m=100$, $L=3$ for all methods and set $n_2=1$, $\varepsilon=0.001$, $m_\ts=20$, $L_\ts=3$, $m_0=0.8$, $M_0=5$ especially for SelectNet.
\begin{table}
\centering
\begin{tabular}{|c|l|}
  \hline
  $d$ & the dimension of the problem \\  \hline
  $m$ & the width of each layer in the solution network\\\hline
  $m_\ts$ & the width of each layer in the selection network \\\hline
  $L$ & the depth of the solution network \\\hline
  $L_\ts$ & the depth of the selection network \\\hline
  $M_0$ & the upper bound of the selection network \\\hline
  $m_0$ & the lower bound of the selection network \\\hline
  $n$ & number of iterations in the optimization  \\\hline
  $n_1$ & number of updates of the selection network in each iteration \\\hline
  $n_2$ & number of updates of the solution network in each iteration \\\hline
  $N_1$ & number of training points inside the domain in each iteration \\\hline
  $N_2$ & number of training points on the domain boundary in each iteration  \\\hline
  $\varepsilon$ & penalty parameter to uniform the selection network  \\\hline
  $\lambda$ & summation weight of the boundary least squares \\
  \hline
\end{tabular}
\caption{\em Parameters in the model and algorithm.}
\label{Tab_parameters}
\end{table}

We take the (relative) $\ell^2$ error at uniformly distributed testing points $\{\Bx_i\}\subset \tilde{Q}$ as the metric to evaluate the accuracy, which is formulated by
\begin{equation}
e_{\ell^2}(\theta):=\left(\frac{\underset{i}{\sum}|u(\Bx_i;\theta)-u(\Bx_i)|^2}{\underset{i}{\sum}|u(\Bx_i)|^2}\right)^\frac{1}{2}.
\end{equation}
Here $\tilde{Q}\subset Q$ is the domain for error evaluation. In all examples, we choose $10000$ testing points for error evaluation.

\subsection{Comparative Experiment}
In the first experiment, we compare the proposed SelectNet model with other network-based methods on the following 2-D Poisson equation,
\begin{equation}\label{34}
\begin{split}
-\Delta u&=1,\quad\text{in~}\Omega:=(-1,1)\times(-1,1),\\
u&=0,\quad\text{on}~\partial\Omega,
\end{split}
\end{equation}
with a solution expressed by the series
\begin{equation}
u(x_1,x_2)=-\frac{64}{\pi^4}\overset{\infty}{\underset{n,m=1 \atop n,m \text{~odd}
}{\sum}}(-1)^{\frac{n+m}{2}}\frac{cos(\frac{n\pi x_1}{2})cos(\frac{m\pi x_2}{2})}{nm(n^2+m^2)}.
\end{equation}
As a classic testing example for PDE methods, the problem \eqref{34} is well-known for the low-regularity of its solution at the four corners of $\Omega$. In this experiment, both the interior training points and testing points are chosen uniformly in the domain, and the boundary training points are chosen uniformly on the boundary. Since the numerical results are influenced by the randomness of the network initialization and the stochastic training process, we implement each method for 50 times with different seeds and compute the mean and standard deviation of the final errors.

\subsubsection{Comparison with Recent Methods}
We implement the basic least squares model, SelectNet model, and recently raised methods: deep Ritz method (DRM) \cite{Ritz} and weak adversarial networks (WAN) \cite{Zang2019} under the same setting, and compare their convergence speed. All methods are implemented for 600 seconds, with learning rates given in \eqref{35} for the first 10000 iterations and $10^{-6}$ for the subsequent iterations. The means and standard deviations of the final $\ell^2$ errors of 50 trials are listed in Table \ref{Tab_case1_errors}. For each method. We select 10 of the 50 trials to present their error curves with respect to the computing time in Figure \ref{Fig_case1_comparison_errors}. It is observed in the first 50 seconds SelectNet has the fastest error decay, and in the end, SelectNet obtains the smallest errors. We also note that for each method, the error deviations are much smaller than the error means, showing the numerical stability with respect to the stochasticity of algorithms.

Across different trials, the selection networks of the SelectNet model evolve in a nearly identical manner. From all trials, we take one to show the surfaces of the selection network at the initial stage and the 2000th, 5000th, 10000th iterations (see Figure \ref{Fig_case1_mesh}). We can clearly find that high peaks appear at the four corners over time where the solution is less regular, while other region preserves to be low and constant. This distribution will improve the convergence at the corners that are ``difficult" to deal with.

\begin{table}
\centering
\begin{tabular}{|c|c|c|c|c|}
  \hline
   & Basic & SelectNet & DRM & WAN \\\hline
    Mean of Errors $\mu$ & $7.588\times10^{-3}$ & $3.288\times10^{-4}$ & $8.681\times10^{-4}$ & $2.177\times10^{-3}$\\\hline
    Standard Deviation $\sigma$ & $1.080\times10^{-3}$ & $7.821\times10^{-5}$ & $1.072\times10^{-4}$ & $8.002\times10^{-4}$\\\hline
    Coefficient of Variation $\sigma/\mu$ & 14.2\% & 23.8\% & 12.4\% & 36.8\% \\\hline
\end{tabular}
\caption{\em Means and standard deviations of the $\ell^2$ errors obtained within 600 seconds by various methods in the comparative example. (totally 50 trials for each method)}
\label{Tab_case1_errors}
\end{table}

\begin{figure}
\centering
\includegraphics[scale=0.5]{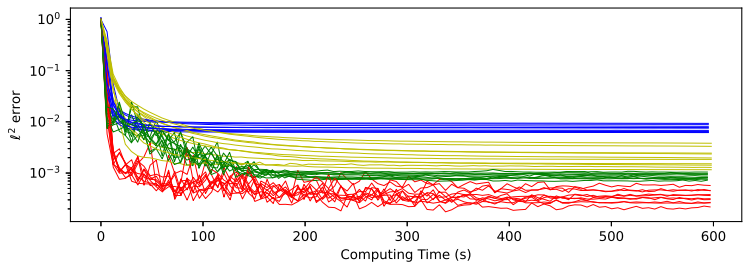}
\caption{\em $\ell^2$ errors v.s. computing time in the comparative example (Red: SelectNet model; Blue: the basic model; green: DRM; yellow: WAN. 10 selected curves for each method).}
\label{Fig_case1_comparison_errors}
\end{figure}

\begin{figure}
\centering
\includegraphics[scale=0.35]{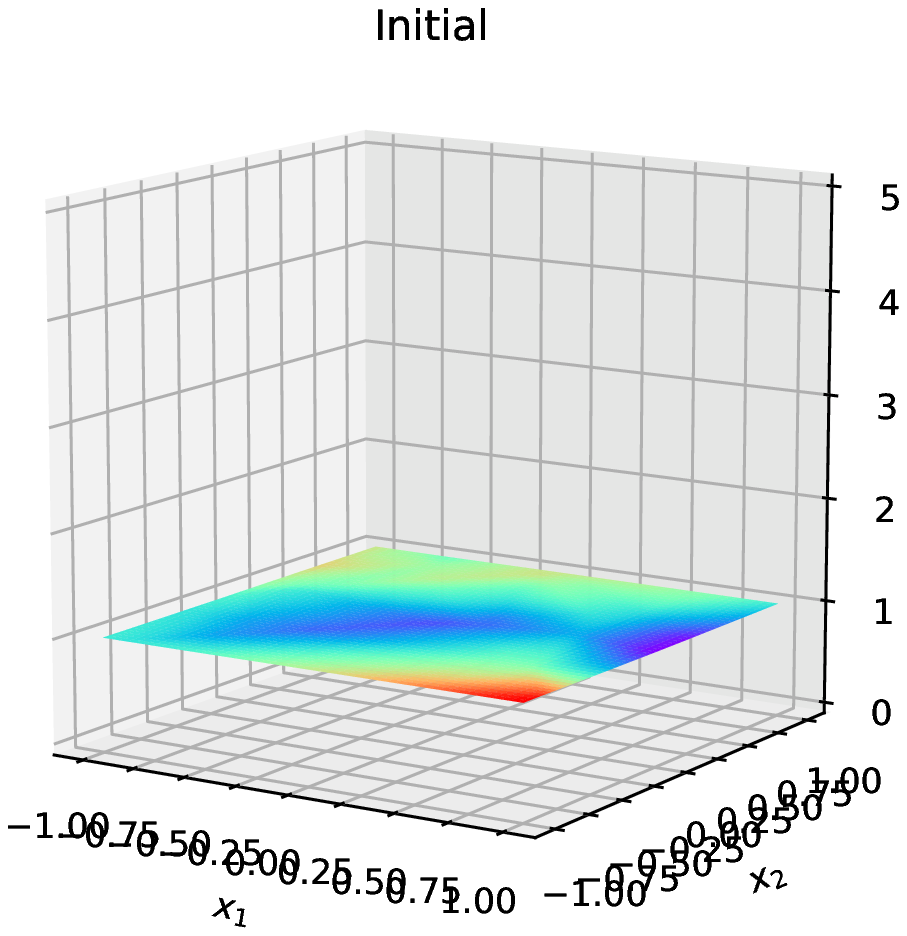}
\includegraphics[scale=0.35]{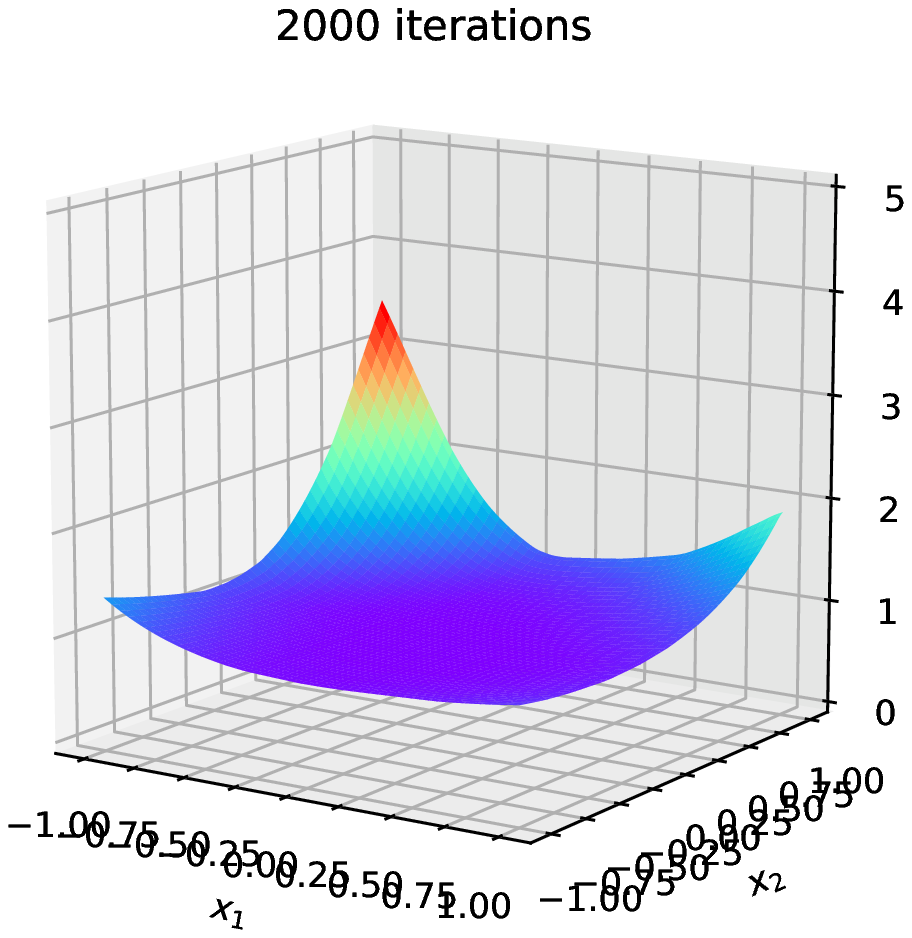}
\includegraphics[scale=0.35]{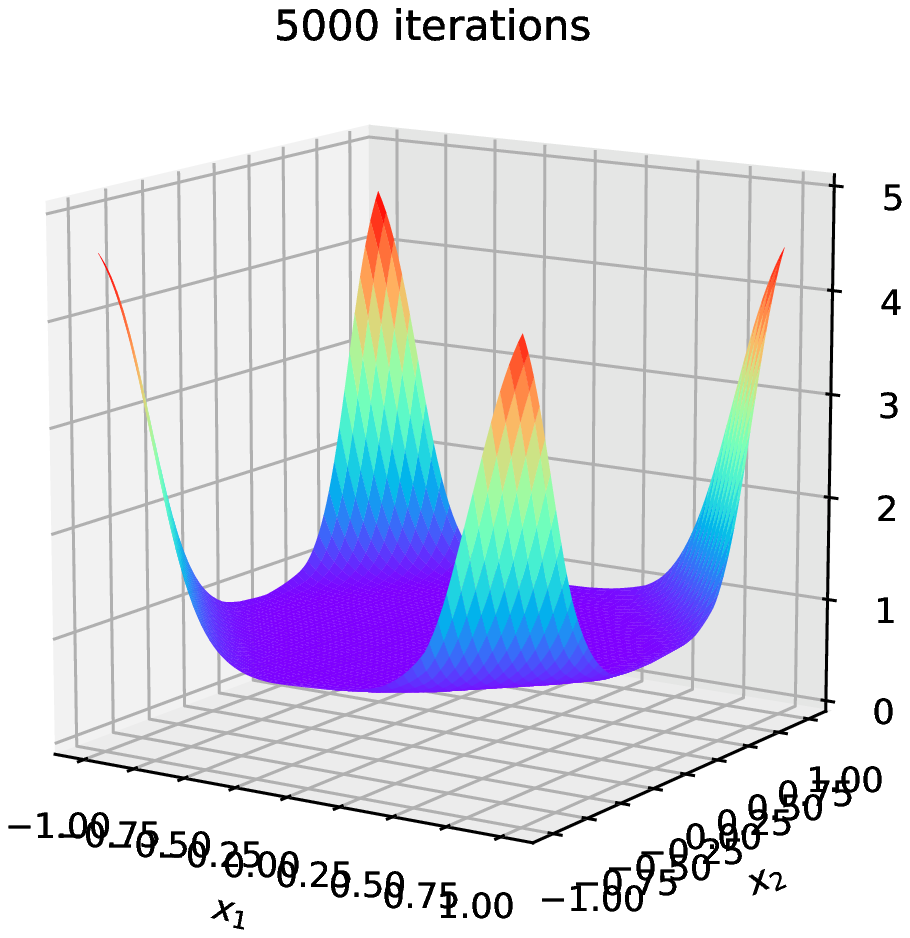}
\includegraphics[scale=0.35]{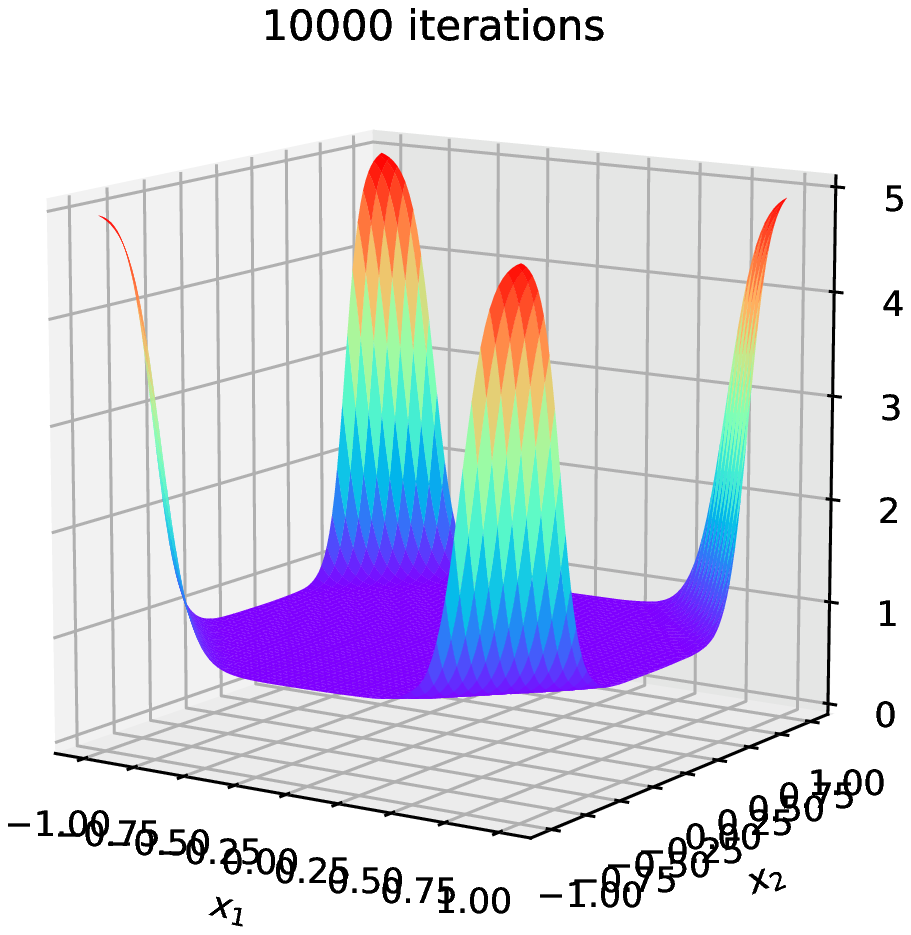}
\caption{\em The evolution of the selection network over time in the comparative example.}
\label{Fig_case1_mesh}
\end{figure}

\subsubsection{Comparison with Binary Weighting}\label{Sec_compare_binary}
To verify that SelectNet is advantageous over other weighting strategies, we also implement the binary weighting method. Namely, in the basic least squares method, we select $p$ ($0<p<1$) training points having larger residuals to be weighted with $w_\tL>1$, and let the other points be weighted with $w_\tS<1$. Specifically, we solve the problem \eqref{34} by
\begin{multline}\label{36}
\underset{\theta}{\min}~ \frac{1}{N_1}\left(\sum_{\Bx\in\mathcal{L}_1}w_\tL|-\Delta u(\Bx;\theta)-1|^2+\sum_{\Bx\in\mathcal{S}_1}w_\tS|-\Delta u(\Bx;\theta)-1|^2\right)\\
+\frac{\lambda}{N_2}\left(\sum_{\Bx\in\mathcal{L}_2}w_\tL|u(\Bx;\theta)|^2+\sum_{\Bx\in\mathcal{S}_2}w_\tS|u(\Bx;\theta)|^2\right),
\end{multline}
where $\{\mathcal{L}_1,\mathcal{S}_1\}$ is a partition of $\{\Bx^1_i\}_{i=1}^{N_1}$ satisfying $|\mathcal{L}_1|=pN_1$, $|\mathcal{S}_1|=(1-p)N_1$, $|-\Delta u(\Bx';\theta)-1|\geq|-\Delta u(\Bx'';\theta)-1|$ for any $\Bx'\in\mathcal{L}_1$ and $\Bx''\in\mathcal{S}_1$; $\{\mathcal{L}_2,\mathcal{S}_2\}$ is a partition of $\{\Bx^2_i\}_{i=1}^{N_2}$ satisfying $|\mathcal{L}_2|=pN_2$, $|\mathcal{S}_2|=(1-p)N_2$, $|u(\Bx';\theta)|\geq| u(\Bx'';\theta)|$ for any $\Bx'\in\mathcal{L}_2$ and $\Bx''\in\mathcal{S}_2$. The binary weights are chosen subject to the following normalization condition
\begin{equation}
w_\tL p+w_\tS(1-p)=1,\quad w_\tL p+w_\tS(1-p)=1.
\end{equation}

As with the preceding tests, we implement the weighting model \eqref{36} with various combinations of parameters for 600 seconds. The means and deviations of the final $\ell^2$ errors are listed in Table \ref{Tab_case1_weighted_errors}. It shows the best combination obtains the mean error $7.375\times10^{-3}$, which is slightly better than the original basic model and much worse than the SelectNet model.

\begin{table}
\centering
\begin{tabular}{|c|c|c|c|}
  \hline
   \multicolumn{4}{|c|}{$p=20\%$}    \\\hline
   $w_\tL/w_\tS$ & 2 & 4 & 8 \\\hline
    Mean of Errors $\mu$ & $8.104\times10^{-3}$ & $8.649\times10^{-3}$ & $9.260\times10^{-3}$ \\\hline
    Standard Deviation $\sigma$ & $8.384\times10^{-4}$ & $1.131\times10^{-3}$ & $1.205\times10^{-3}$ \\\hline
    Coefficient of Variation $\sigma/\mu$ & 10.3\% & 13.1\% & 13.0\% \\\hline\hline
   \multicolumn{4}{|c|}{$p=50\%$}    \\\hline
   $w_\tL/w_\tS$ & 2 & 4 & 8 \\\hline
    Mean of Errors $\mu$ & $7.395\times10^{-3}$ & $7.612\times10^{-3}$ & $7.506\times10^{-3}$ \\\hline
    Standard Deviation $\sigma$ & $1.080\times10^{-3}$ & $1.168\times10^{-3}$ & $1.113\times10^{-3}$ \\\hline
    Coefficient of Variation $\sigma/\mu$ & 14.6\% & 15.3\% & 14.8\% \\\hline\hline
   \multicolumn{4}{|c|}{$p=80\%$}    \\\hline
   $w_\tL/w_\tS$ & 2 & 4 & 8 \\\hline
    Mean of Errors $\mu$ & $7.426\times10^{-3}$ & $\bm{7.375\times10^{-3}}$ & $7.512\times10^{-3}$ \\\hline
    Standard Deviation $\sigma$ & $9.502\times10^{-4}$ & $1.023\times10^{-3}$ & $1.077\times10^{-3}$ \\\hline
    Coefficient of Variation $\sigma/\mu$ & 12.8\% & 13.9\% & 14.3\% \\\hline
\end{tabular}
\caption{\em Means and standard deviations of the $\ell^2$ errors obtained within 600 seconds by binary weighting in the comparative example. (totally 50 trials for each combination)}
\label{Tab_case1_weighted_errors}
\end{table}

\subsection{High-dimensional Examples}
In the second experiment, we will implement the basic and SelectNet models in a series of high-dimensional examples ($d\geq10$) to reflect the advantage of using SelectNet. Note from the preceding comparative experiment that SelectNet can obtain much smaller error means than the basic model, which overwhelms the error deviations. Therefore, considering the long time spent in high-dimensional problems, we only implement both models for once in each case to present the results in the paper.

Since in high-dimensional cases, most of the random points following a uniform distribution are near the boundary, we take an annularly uniform strategy instead of uniform sampling. Specifically, for a high-dimensional unit circle, we divide it into $N_a$ annuli $\{k/N_a<|\Bx|<(k+1)/N_a\}_{k=0}^{N_a-1}$ and generates $N_1/N_a$ samples uniformly in each annulus. In the following experiments, we choose $N_a=10$. This sampling strategy is applied in generating interior training points and testing points. For generating boundary training points, we still use uniform sampling.

\subsubsection{Elliptic Equations with Low-Regularity Solutions}\label{Sec_01}
First, let us consider the nonlinear elliptic equation inside a bounded domain
\begin{equation}
\begin{split}
-\nabla\cdot (a(x)\nabla u) + |\nabla u|^2&=f(x),\quad\text{in~}\Omega:=\{x:|x|<1\},\\
u&=g(x),\quad\text{on}~\partial\Omega,
\end{split}
\end{equation}
with $a(x)=1+\frac{1}{2}|x|^2$. In this case, we specify the exact solution by
\begin{equation}\label{14}
u(x)=\sin(\frac{\pi}{2}(1-|x|)^{2.5}),
\end{equation}
whose first derivative is singular at the origin and the third derivative is singular on the boundary. Note the problem is nonlinear if $\mu\neq0$. We solve the high-dimensional nonlinear problem for $d=10$, $20$ and $100$. The errors obtained by the basic and SelectNet models with 20000 iterations are listed in Table \ref{Tab_case5_errors}. Since the basic model costs less time for one iteration, we also list the errors obtained by SelectNet with the same computing time as the basic model for comparison. The curves of error decay versus iterations are shown in Fig. \ref{Fig_case5_errors}. From these results, it is observed both models are effective on the nonlinear elliptic problem of all dimensions, but SelectNet has a clearly better performance than the basic model: its accuracy is one-digit better than the basic model. Besides, we present in Fig. \ref{Fig_case5_mesh} the following surfaces at $(x_1,x_2)$-slice
\begin{itemize}
  \item the numerical solution: $u(x_1,x_2,0,\cdots,0;\theta)$
  \item the modulus of the numerical residual error: $|\mathcal{D}u(x_1,x_2,0,\cdots,0;\theta)-f(x_1,x_2,0,\cdots,0)|$
  \item the selection network: $\phi_\ts'(x_1,x_2,0,\cdots,0;\theta_\ts')$
\end{itemize}
for the $20$-dimensional case. It shows that the residual error accumulates near the origin due to its low regularity. On the other hand, the selection network attains its peak at the origin, implying that training points are highly weighted near the origin where the residual error is mainly distributed. Note that the selection network is not supported locally near the low-regularity point, which means that the selection network will not make the training of the solution network focus on the low-regularity point only.

\begin{table}
\centering
\begin{tabular}{|c|c|c|c|}
  \hline
  Dimension & SelectNet & SelectNet$^*$ & Basic \\\hline
    $d=10$& $7.944\times10^{-4}$ & $8.089\times10^{-4}$ & $3.193\times10^{-3}$ \\\hline
    $d=20$& $9.584\times10^{-4}$ & $1.241\times10^{-3}$ & $1.707\times10^{-2}$ \\\hline
    $d=100$ & $9.257\times10^{-3}$ & $1.004\times10^{-2}$ & $1.862\times10^{-1}$ \\\hline
\end{tabular}
\caption{\em $\ell^2$ errors obtained by various models in the nonlinear elliptic example. (``SelectNet" and ``Basic" denote the final errors obtained by SelectNet and basic models with $20000$ iterations; ``SelectNet$^*$" denotes the error obtained by SelectNet model with the same computing time as 20000 iterations of basic model, the same below)}
\label{Tab_case5_errors}
\end{table}

\begin{figure}
\centering
\subfloat[$d=10$]{
\includegraphics[scale=0.5]{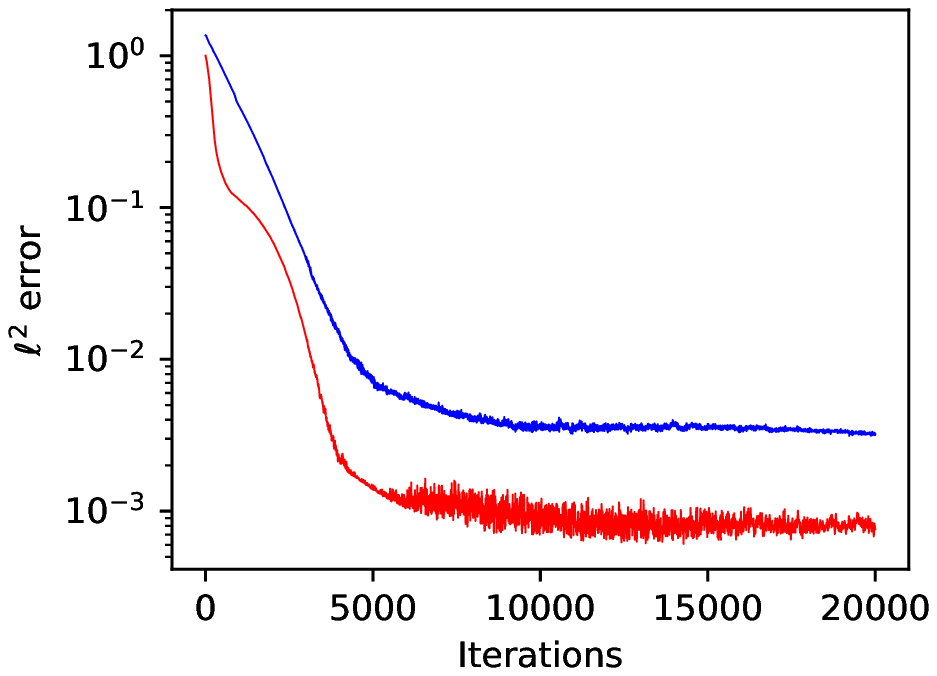}}
\subfloat[$d=20$]{
\includegraphics[scale=0.5]{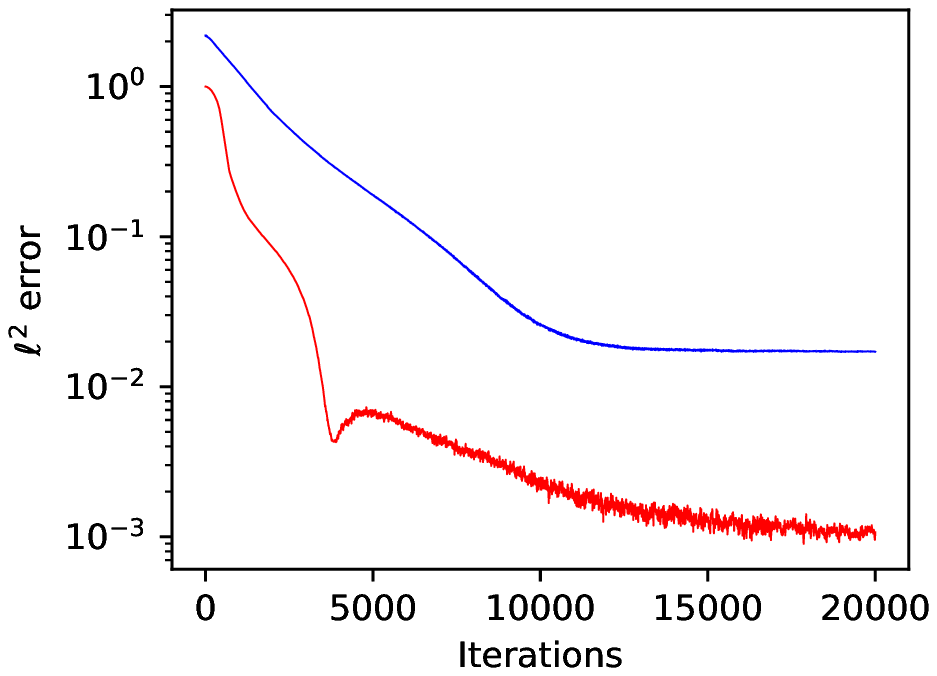}}
\subfloat[$d=100$]{
\includegraphics[scale=0.5]{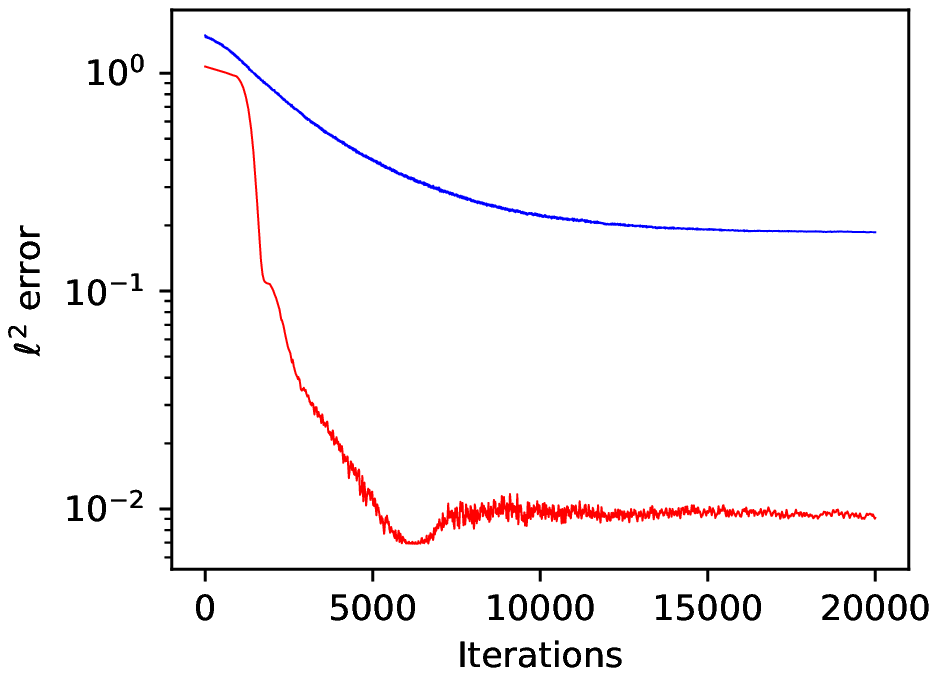}}
\caption{\em $\ell^2$ errors v.s. iterations in the nonlinear elliptic example (Red: SelectNet model; Blue: the basic model).}
\label{Fig_case5_errors}
\end{figure}

\begin{figure}
\centering
\includegraphics[scale=0.4]{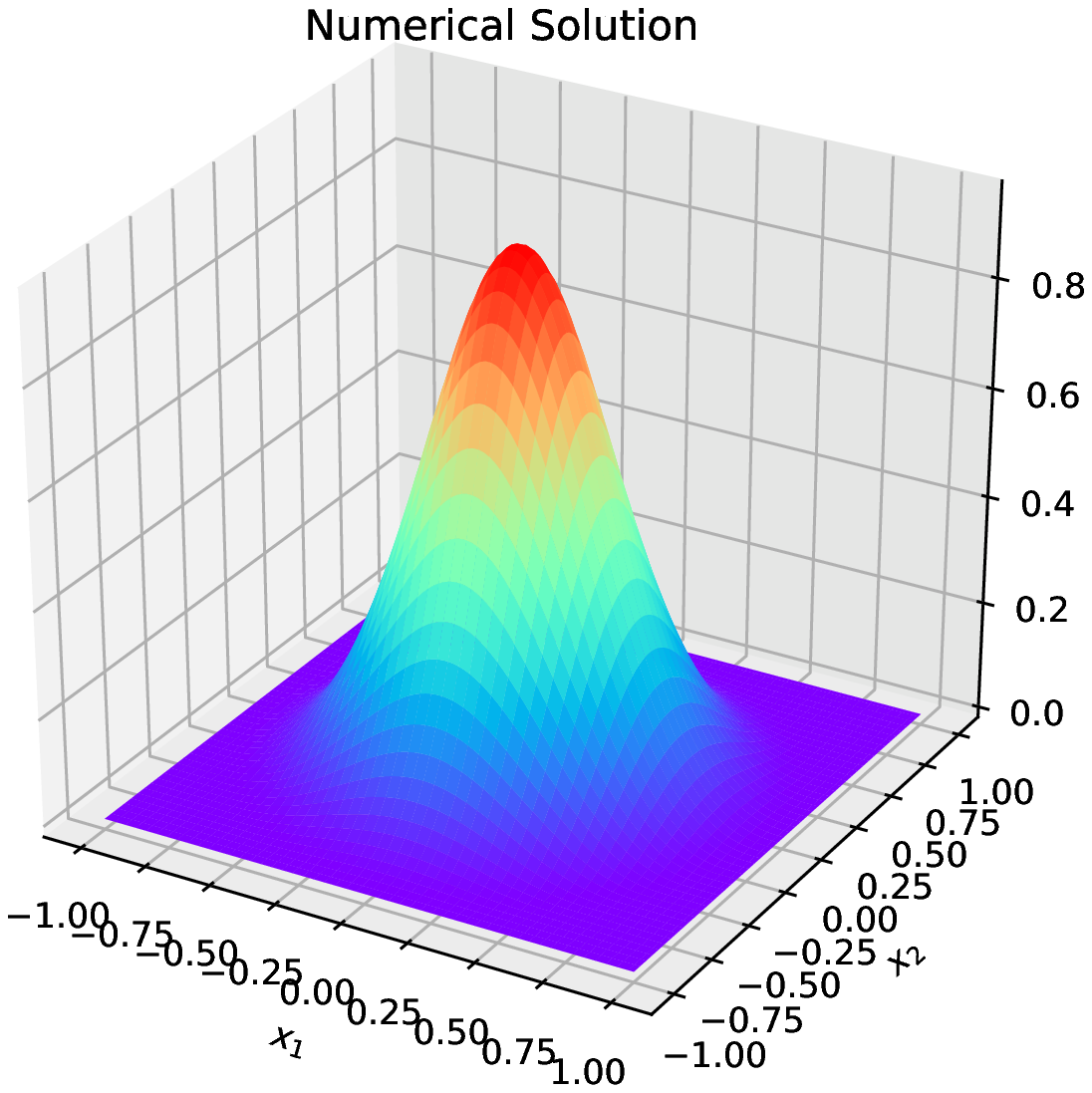}
\includegraphics[scale=0.4]{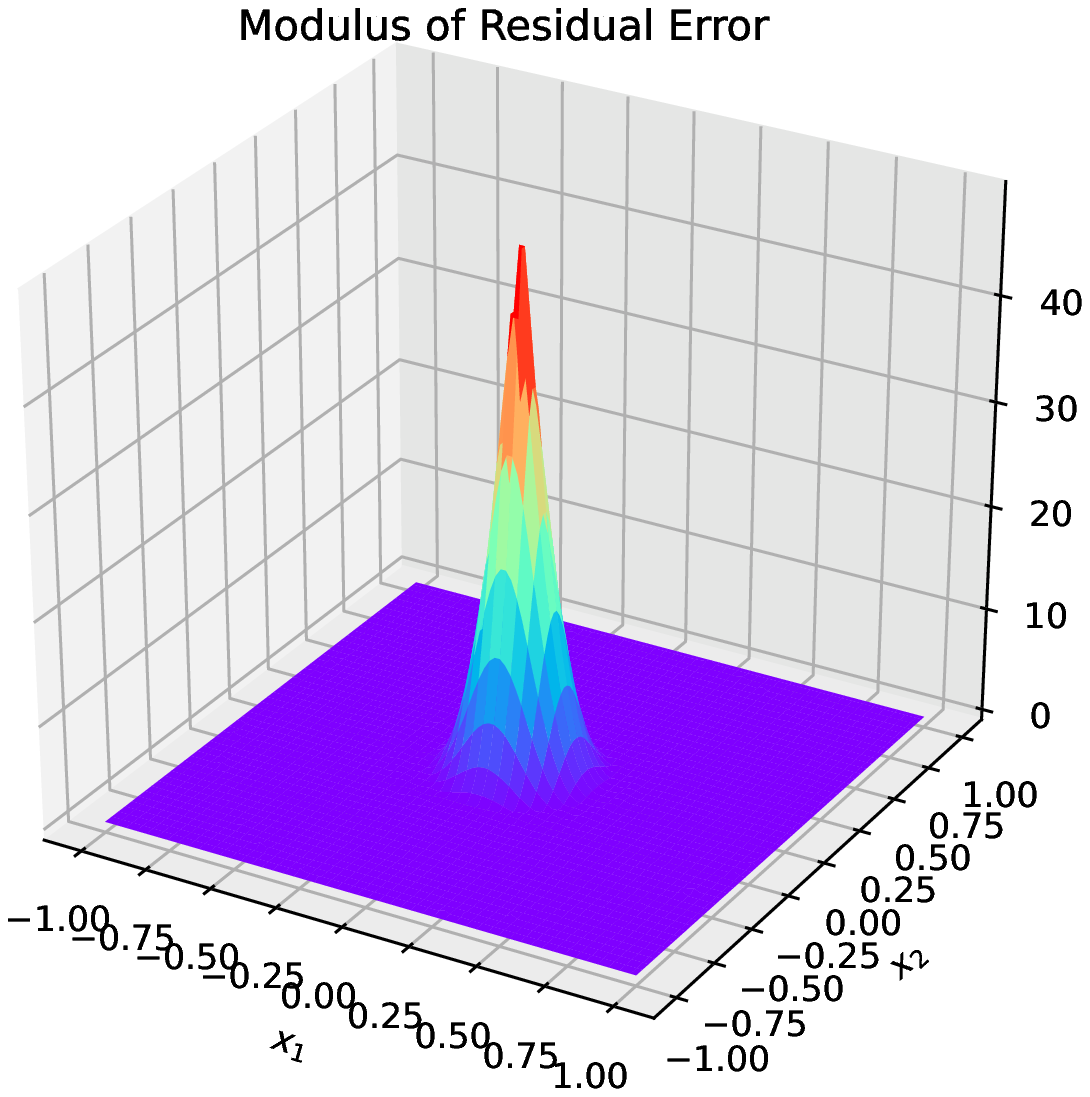}
\includegraphics[scale=0.4]{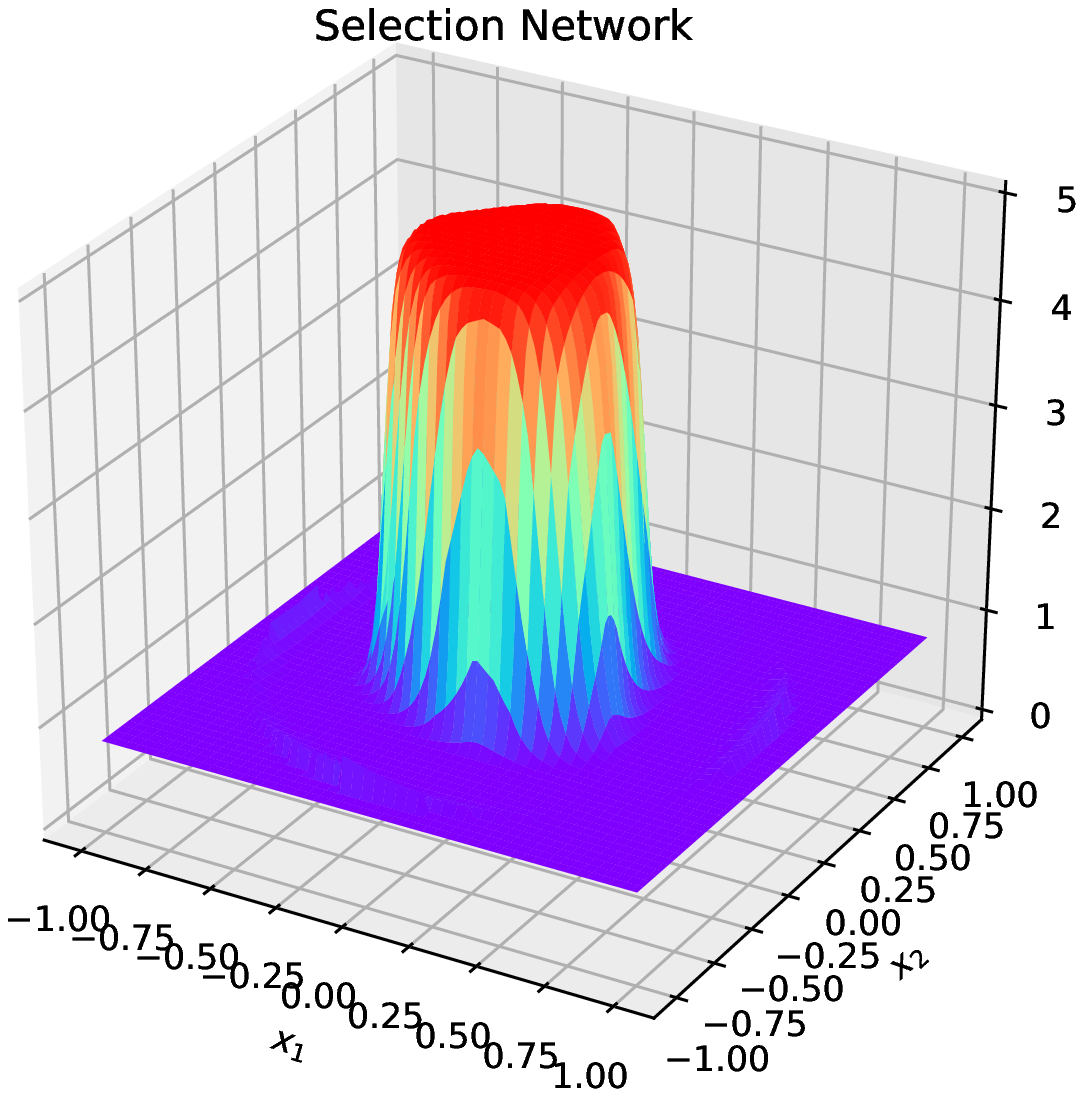}
\caption{\em The $(x_1,x_2)$-surfaces of the numerical solution, the modulus of residual errors and selection network by SelectNet (d=20) in the nonlinear elliptic example.}
\label{Fig_case5_mesh}
\end{figure}

\subsubsection{Linear Parabolic Equations}
In this example, SelectNet is tested on the following initial boundary value problem of the linear parabolic equation
\begin{equation}
\begin{split}
\partial_t u(x,t)-\nabla_x\cdot(a(x)\nabla_x u(x,t))&=f(x,t),\quad\text{in~}Q:=\Omega\times(0,1),\\
u(x,t)&=g(x),\quad\text{on}~\partial\Omega\times(0,1),\\
u(x,0)&=h(x),\quad\text{in}~\Omega,\\
\end{split}
\end{equation}
where $a(x)=1+\frac{1}{2}|x|$ and $\Omega:=\{x:|x|<1\}$. The exact solution is set by
\begin{equation}
u(x,t)=\exp(|x|\sqrt{1-t}).
\end{equation}
Note $u$ is at most $C^0$ smooth at $t=1$ and $|x|=0$. In the SelectNet model, time-discretization schemes are not utilized. Instead, we regard $t$ as an extra spatial variable of the problem. Hence the problem domain $\Omega\times(0,1)$ is an analog of a hypercylinder, and the ``boundary" value is specified in the bottom $\Omega\times\{t=0\}$ and the side $\partial\Omega\times(0,1)$. This example is tested for $d=10$, $20$ and $100$, by evaluating the relative $\ell^2$ error in $\Omega\times(0,1)$. The errors of the basic and SelectNet models are listed in Table \ref{Tab_case7_errors}. It is clearly shown SelectNet still obtains smaller errors than the basic model with the same number of iterations or computing time. In Fig. \ref{Fig_case7_errors} the curves of error decay are presented, and in Fig. \ref{Fig_case7_mesh} the $(t,x_1)$-surfaces of the numerical solution, the modulus of the residual errors and selection network for $d=20$ are displayed, from that we can observe the residual error is mainly distributed near the singular point $\Bx=0$ and the terminal slice $t=1$. Accordingly, the selection network takes its maximum in this region.

\begin{table}
\centering
\begin{tabular}{|c|c|c|c|}
  \hline
  Dimension & SelectNet & SelectNet$^*$ & Basic \\\hline
  $d=10$ & $1.490\times10^{-2}$ & $1.502\times10^{-2}$ & $3.531\times10^{-2}$ \\\hline
  $d=20$ & $2.990\times10^{-2}$ & $3.000\times10^{-2}$ & $8.748\times10^{-2}$ \\\hline
  $d=100$ & $6.302\times10^{-2}$ & $6.268\times10^{-2}$ & $1.357\times10^{-1}$ \\\hline
\end{tabular}
\caption{\em $\ell^2$ errors obtained by various models in the linear parabolic example.}
\label{Tab_case7_errors}
\end{table}

\begin{figure}
\centering
\subfloat[$d=10$]{
\includegraphics[scale=0.5]{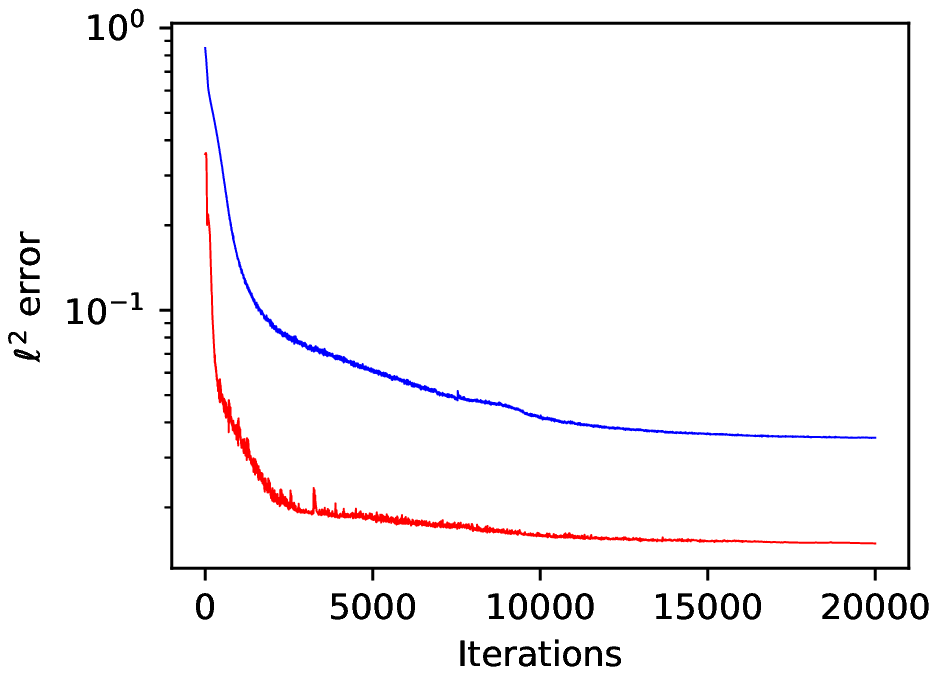}}
\subfloat[$d=20$]{
\includegraphics[scale=0.5]{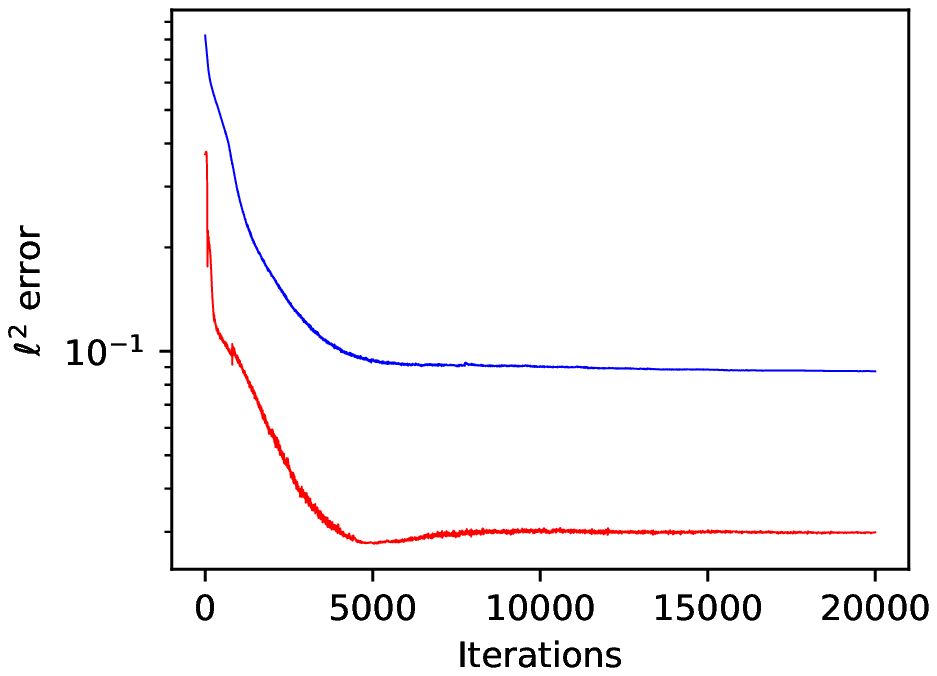}}
\subfloat[$d=100$]{
\includegraphics[scale=0.5]{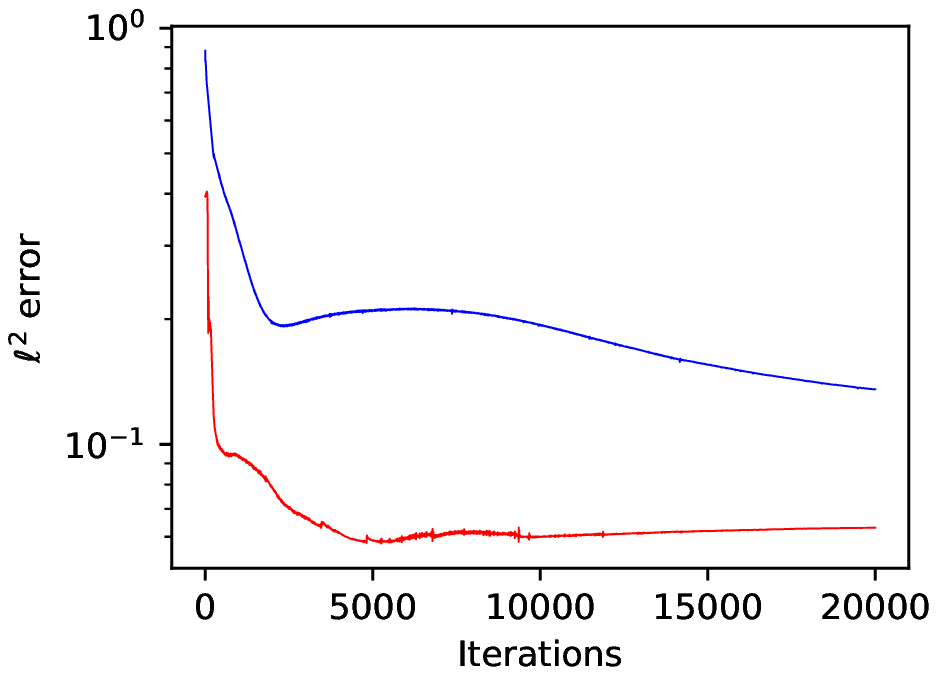}}
\caption{\em $\ell^2$ errors v.s. iterations in the linear parabolic example (Red: SelectNet model; Blue: the basic model).}
\label{Fig_case7_errors}
\end{figure}

\begin{figure}
\centering
\includegraphics[scale=0.4]{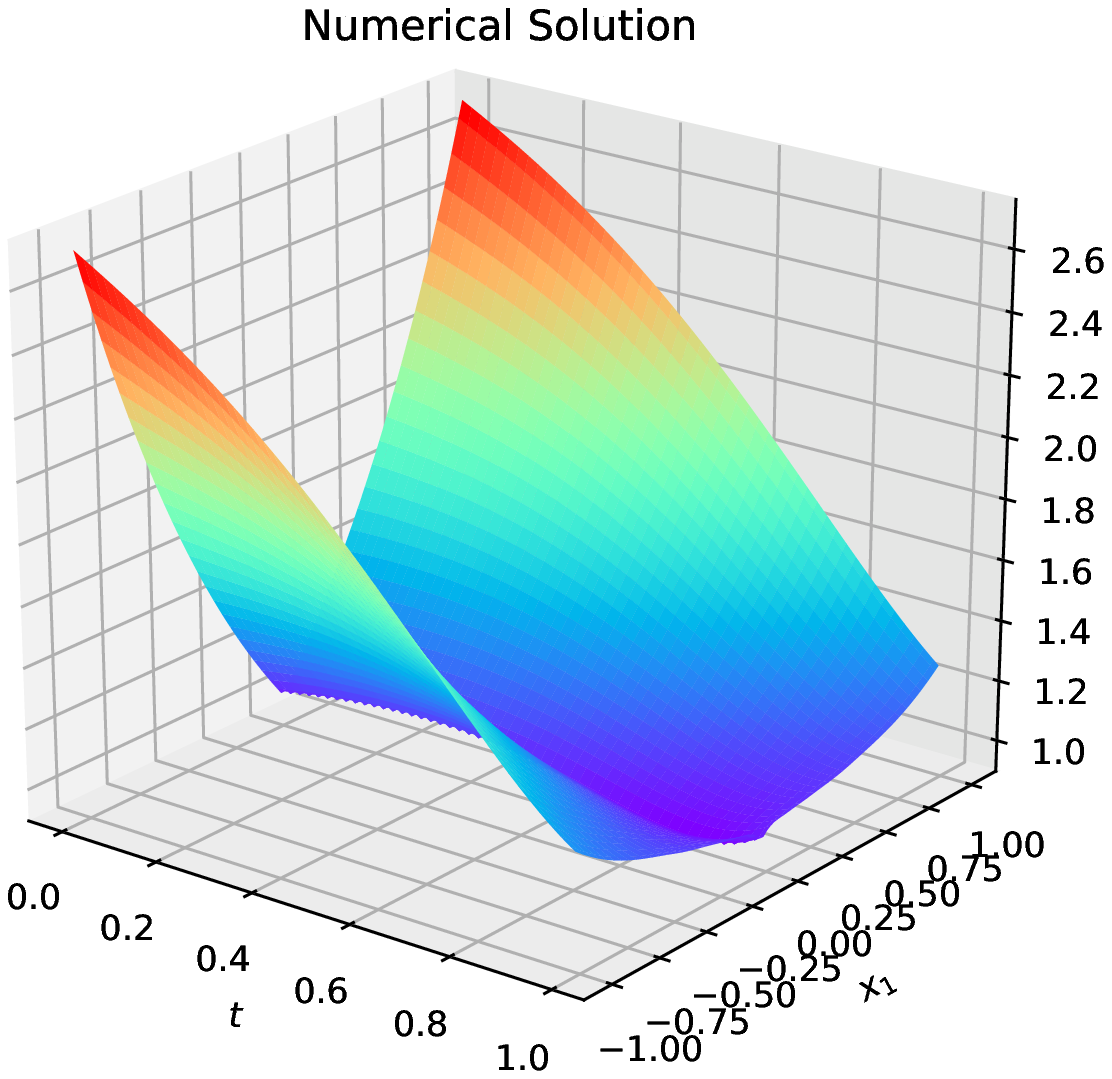}
\includegraphics[scale=0.4]{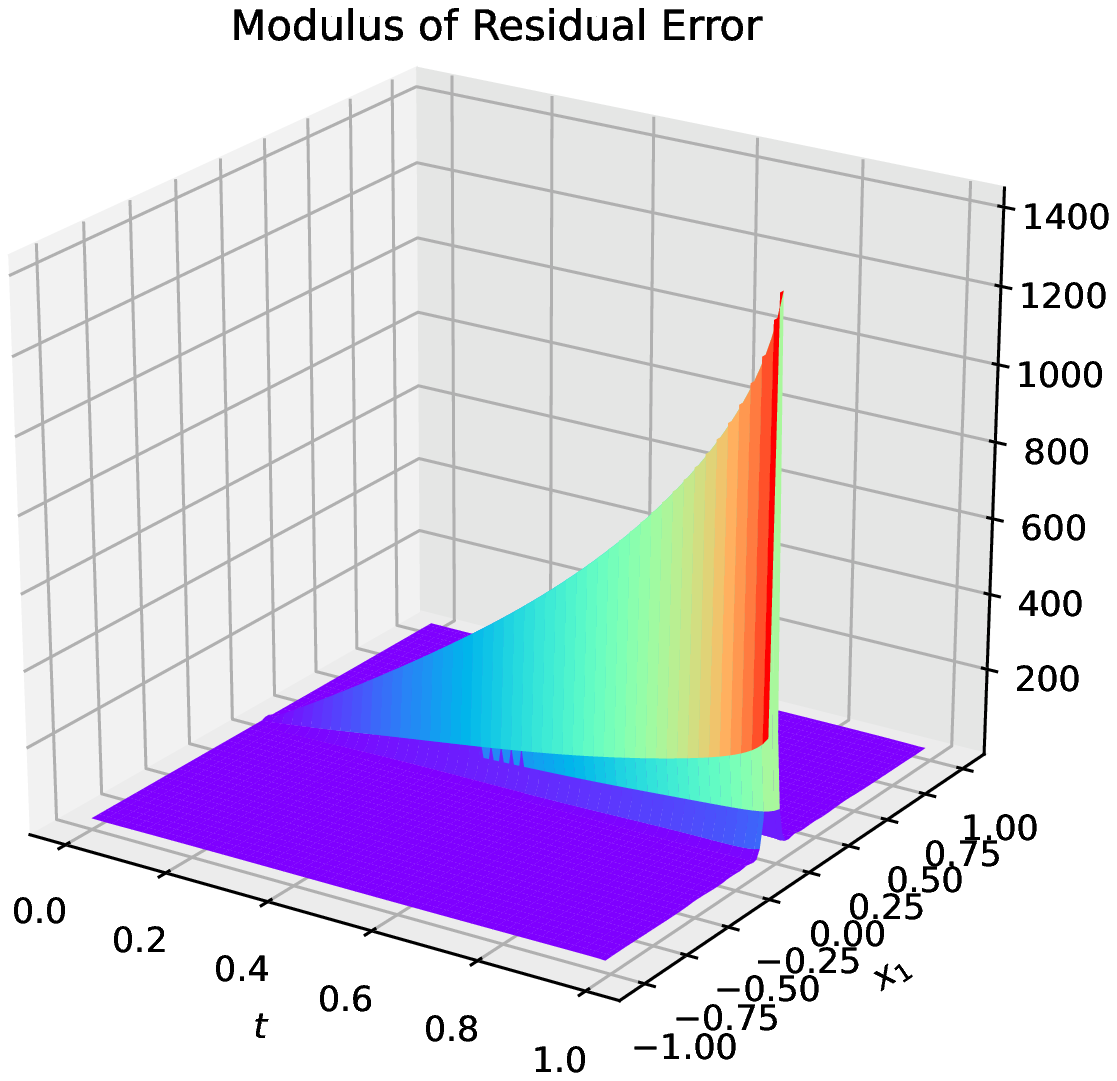}
\includegraphics[scale=0.4]{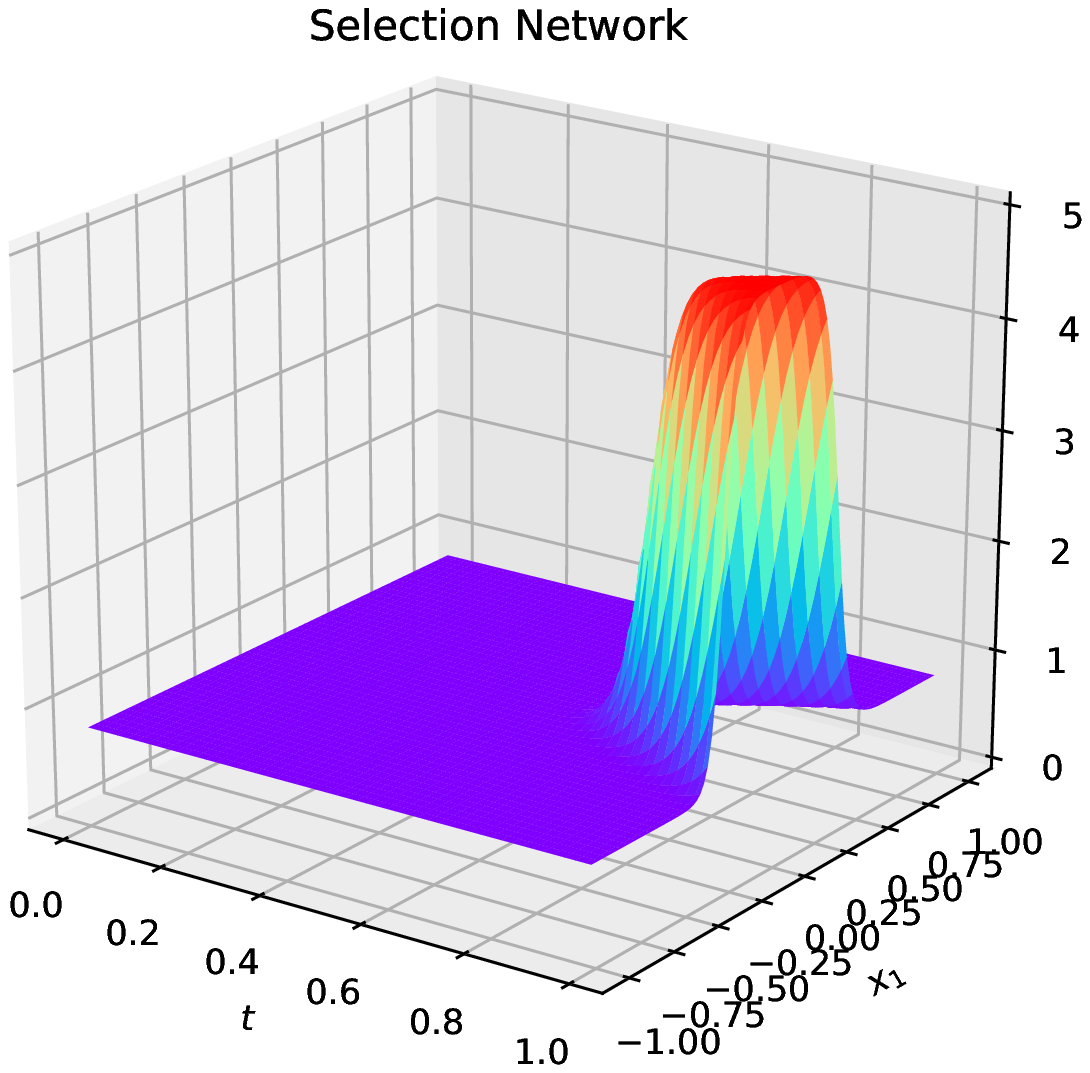}
\caption{\em The $(t,x_1)$-surfaces of the numerical solution, the modulus of residual errors and selection network by SelectNet (d=20) in the linear parabolic example.}
\label{Fig_case7_mesh}
\end{figure}

\subsubsection{Allen-Cahn Equation}
In this example, we test SelectNet model on the following 100-dimensional Allen-Cahn equation
\begin{equation}
\begin{split}
\partial_t u(x,t)-\Delta_x u(x,t)-u(x,t)+u^3(x,t)&=f(x,t),\quad\text{in~}Q:=\Omega\times(0,1),\\
u(x,t)&=g(x),\quad\text{on}~\partial\Omega\times(0,1),\\
u(x,0)&=h(x),\quad\text{in}~\Omega,\\
\end{split}
\end{equation}
where $a(x)=1+\frac{1}{2}|x|$ and $\Omega:=\{x:|x|<1\}$. Note the Allen-Cahn equation is a nonlinear parabolic equation. The exact solution is set as
\begin{equation}\label{15}
u(x,t)=e^{-t}\sin(\frac{\pi}{2}(1-|x|)^{2.5}).
\end{equation}
The errors obtained by SelectNet model and the basic model with 20000 iterations are $6.358\times10^{-3}$ and $3.347\times10^{-2}$, respectively. And the SelectNet error obtained with the same computing time as the basic model is $6.218\times10^{-3}$. The error curves versus iterations are shown in Fig. \ref{Fig_case9_errors}. It can be seen in the figure the error curve of the SelectNet decays faster to lower levels than the basic model. Moreover, the $(t,x_1)$-surface of the numerical solution, the modulus of residual errors and selection network are shown in Fig. \ref{Fig_case9_mesh}, from that we can observe the selection network takes its maximum near the singular point $\Bx=0$ and the initial slice $t=0$, where the highest residual error is located.

\begin{figure}
\centering
\subfloat[$d=100$]{
\includegraphics[scale=0.5]{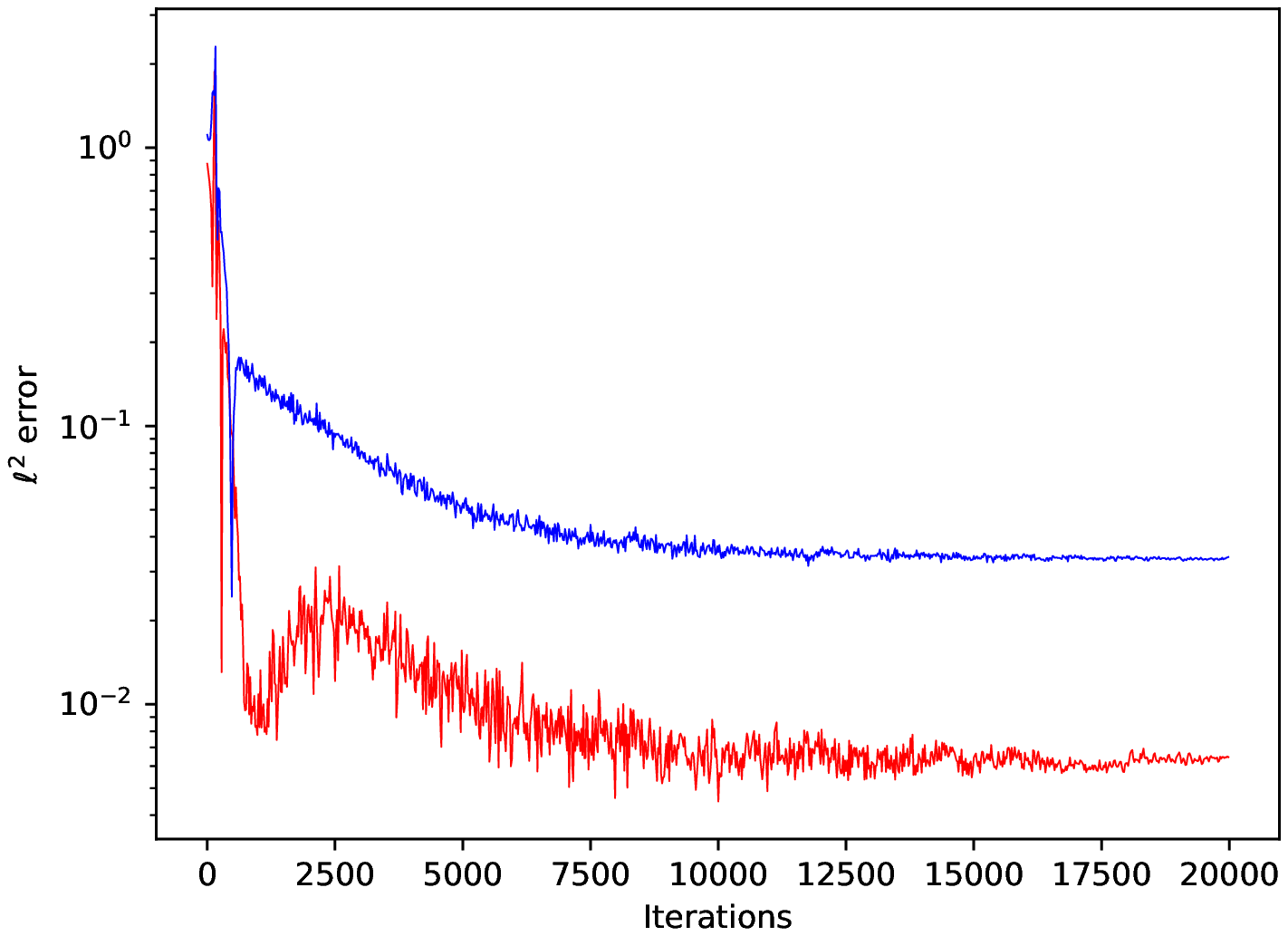}}
\caption{\em $\ell^2$ errors v.s. iterations in the Allen-Cahn example (Red: SelectNet model; Blue: the basic model).}
\label{Fig_case9_errors}
\end{figure}

\begin{figure}
\centering
\includegraphics[scale=0.4]{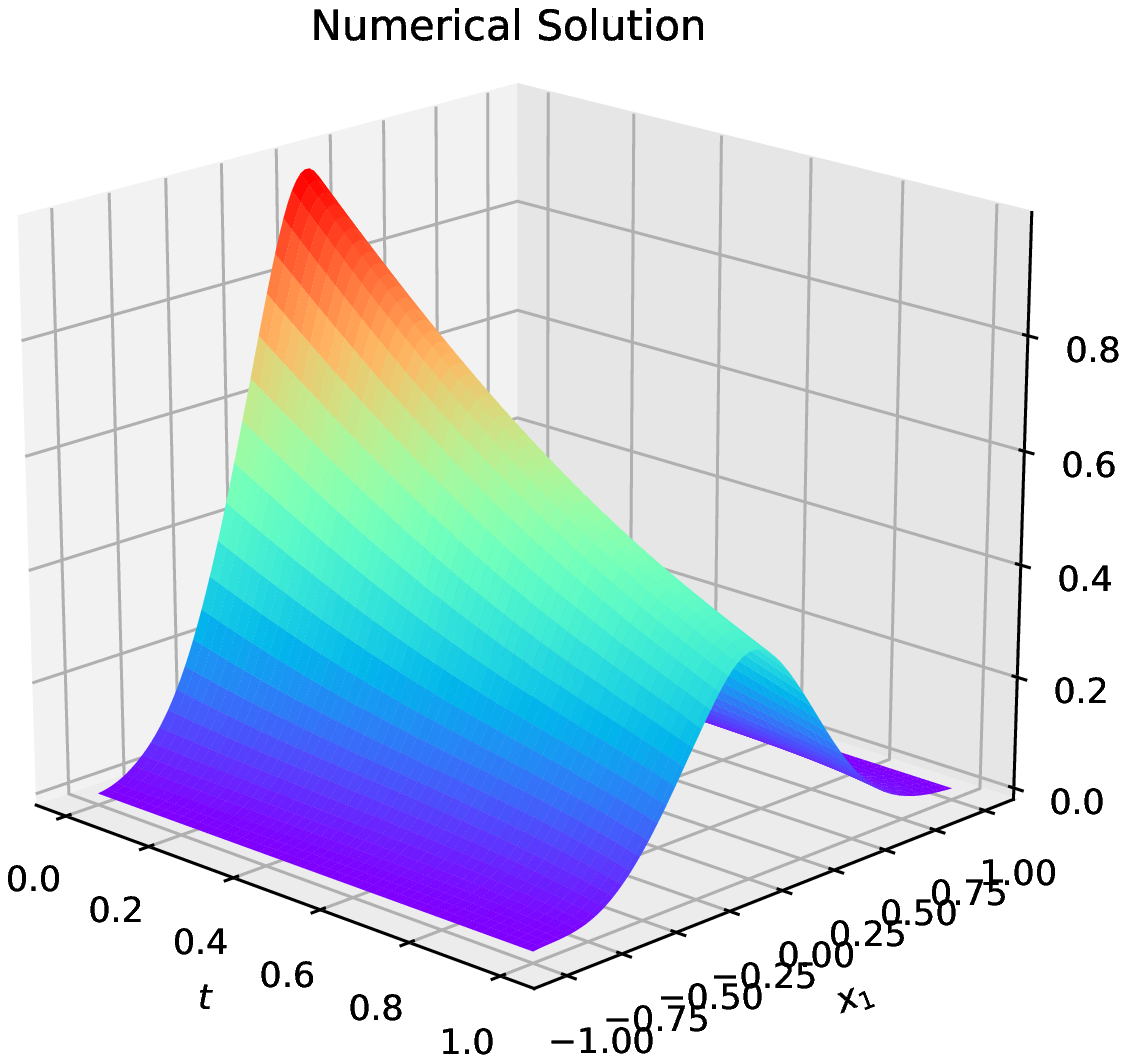}
\includegraphics[scale=0.4]{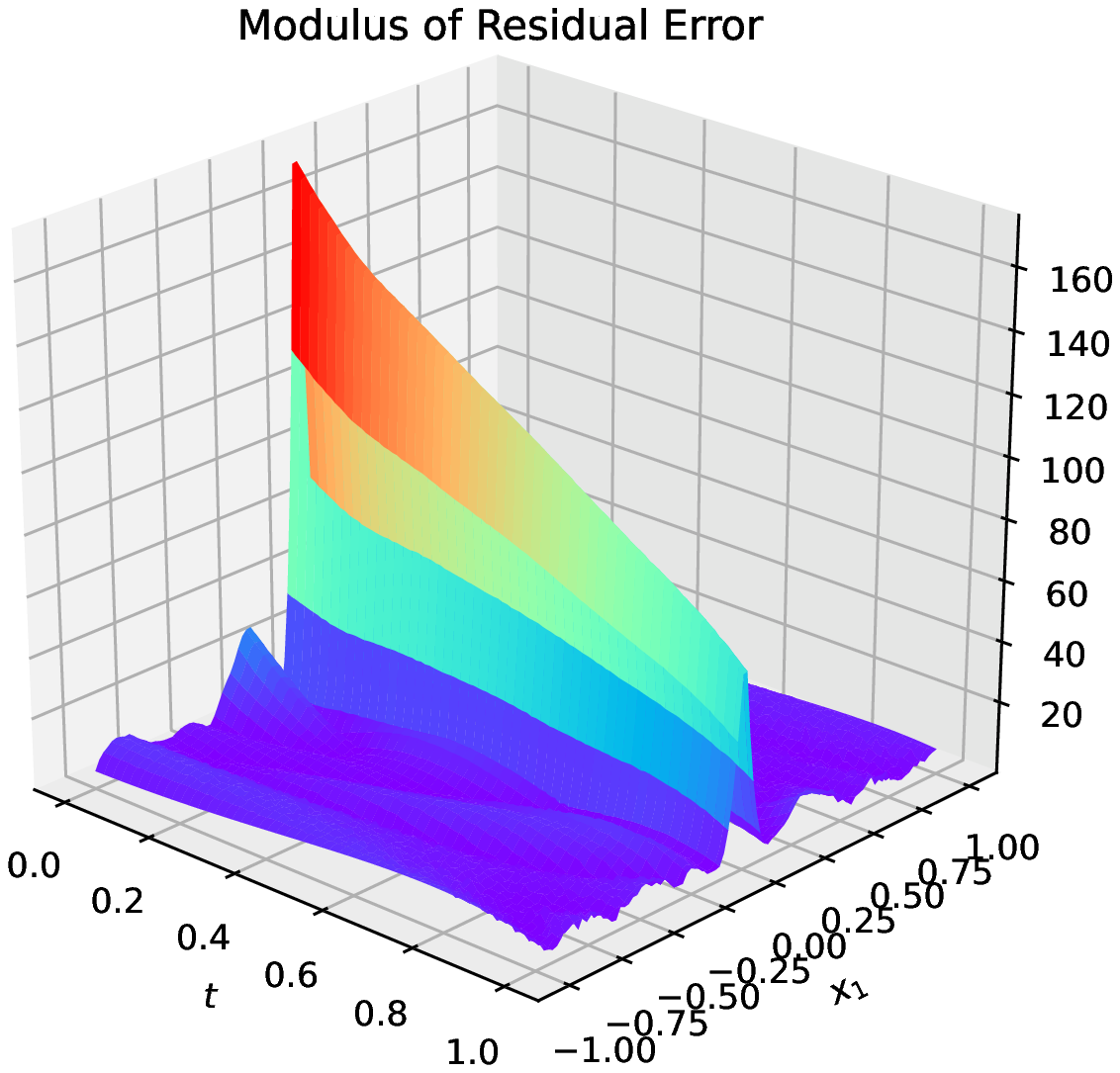}
\includegraphics[scale=0.4]{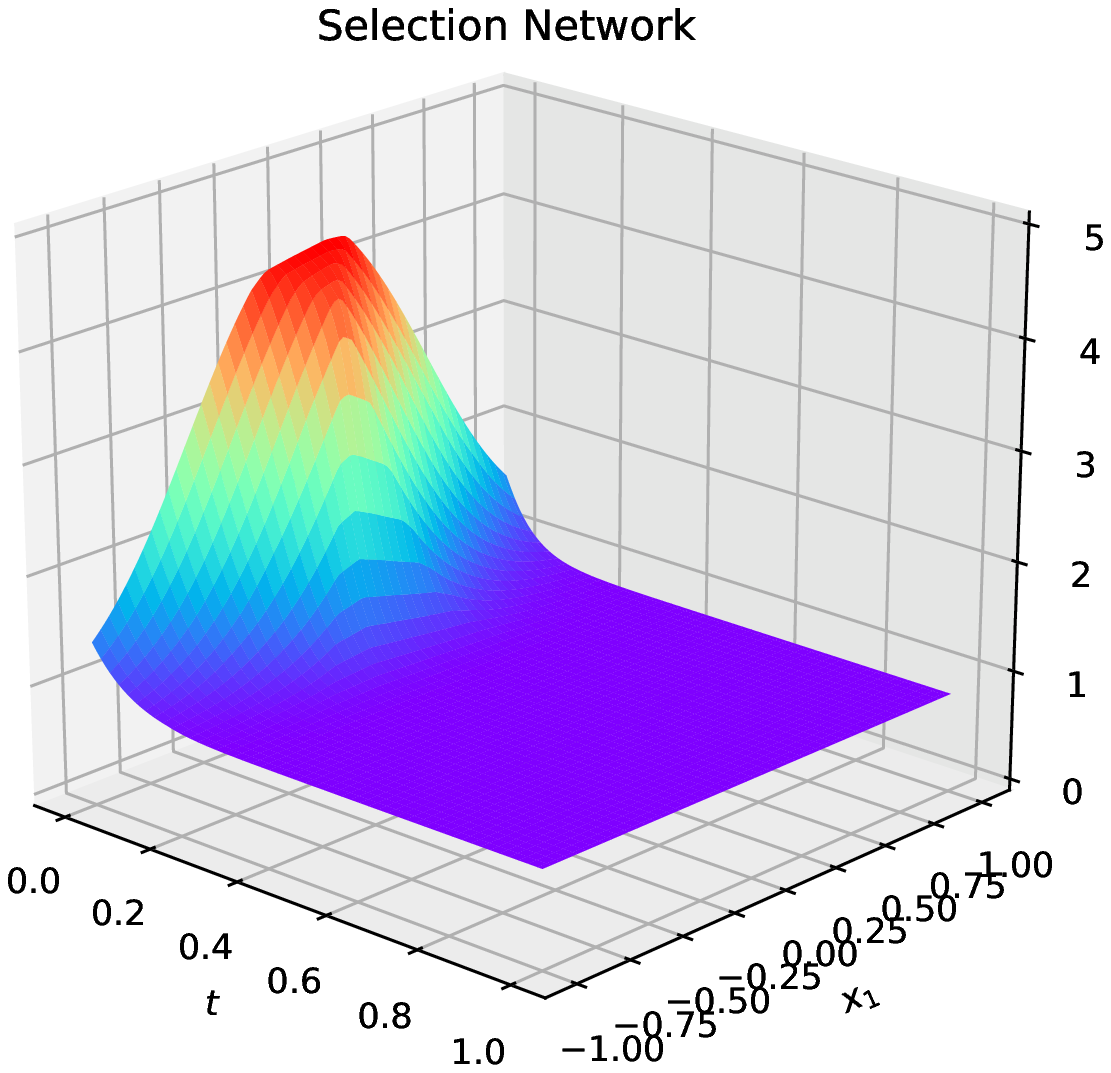}
\caption{\em The $(t,x_1)$-surfaces of the numerical solution, the modulus of residual errors and selection network by SelectNet in the Allen-Cahn example.}
\label{Fig_case9_mesh}
\end{figure}

\subsubsection{Hyperbolic Equations}
In the last example, we test SelectNet by solving the initial boundary value problem of the hyperbolic (wave) equation, which is given by
\begin{equation}
\begin{split}
&\frac{\partial^2 u(x,t)}{\partial t^2}-\Delta_x u(x,t)=f(x,t),\text{~in~}\Omega\times(0,1),\\
&u(x,t)=g_0(x,t),\text{~on~}\partial\Omega\times(0,1),\\
&u(x,0)=h_0(x),\quad\frac{\partial u(x,0)}{\partial t}=h_1(x)\text{~in~}\Omega,
\end{split}
\end{equation}
with $\Omega:=\{x:|x|<1\}$ and exact solution is set by
\begin{equation}
u(x,t)=\left(\exp(t^2)-1\right)\sin(\frac{\pi}{2}(1-|x|)^{2.5}).
\end{equation}
Same as in preceding examples, we solve the problem of $d=10$, $20$ and $100$ and compute the relative $\ell^2$ errors of the basic and SelectNet models. The obtained errors are listed in Table \ref{Tab_case6_errors}, which demonstrates the SelectNet still converges faster than the basic model (especially when $d$ is higher), obtaining smaller errors. Also, we display the curves of error decay in Fig. \ref{Fig_case6_errors}, and the $(t,x_1)$-surfaces of the numerical results when $d=20$ in Fig. \ref{Fig_case6_mesh}. The results in the examples of parabolic and hyperbolic equations imply our proposed model works successfully for time-dependent problems.

\begin{table}
\centering
\begin{tabular}{|c|c|c|c|}
  \hline
  Dimension & SelectNet & SelectNet$^*$ & Basic \\\hline
    $d=10$& $1.671\times10^{-2}$ & $1.701\times10^{-2}$ & $5.200\times10^{-2}$ \\\hline
    $d=20$& $3.281\times10^{-2}$ & $3.292\times10^{-2}$ & $9.665\times10^{-2}$ \\\hline
    $d=100$& $6.319\times10^{-2}$ & $6.351\times10^{-2}$ & $3.089\times10^{-1}$ \\\hline
\end{tabular}
\caption{\em Final $\ell^2$ errors obtained by various models in the hyperbolic example.}
\label{Tab_case6_errors}
\end{table}

\begin{figure}
\centering
\subfloat[$d=10$]{
\includegraphics[scale=0.5]{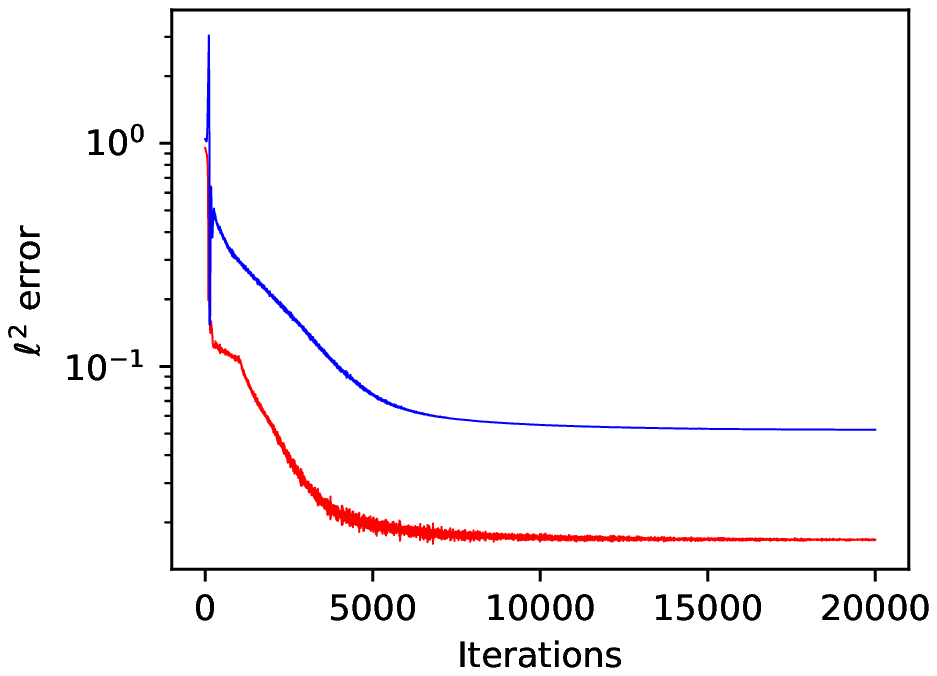}}
\subfloat[$d=20$]{
\includegraphics[scale=0.5]{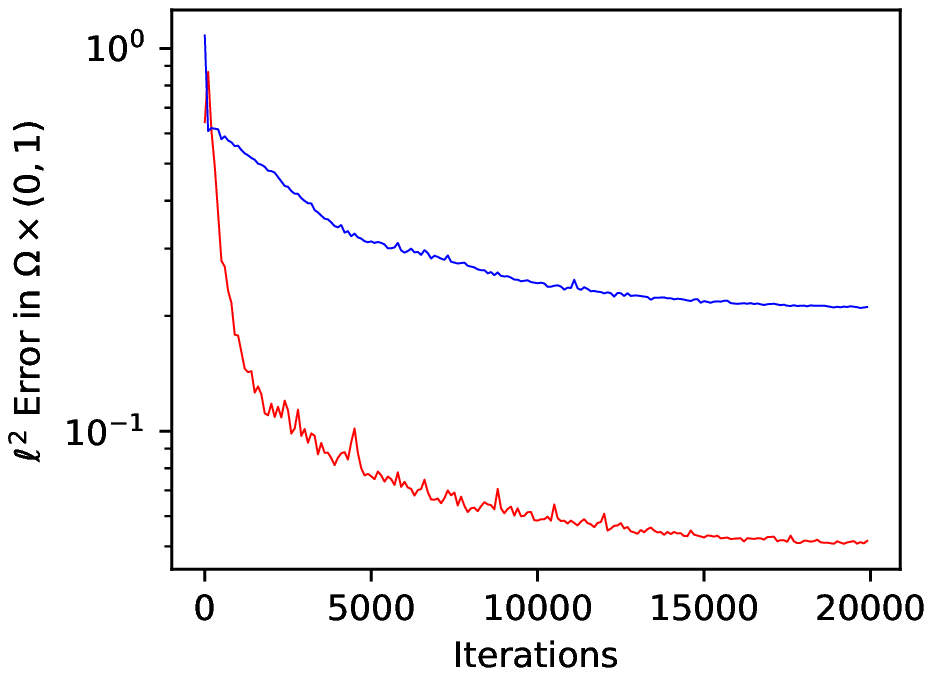}}
\subfloat[$d=100$]{
\includegraphics[scale=0.5]{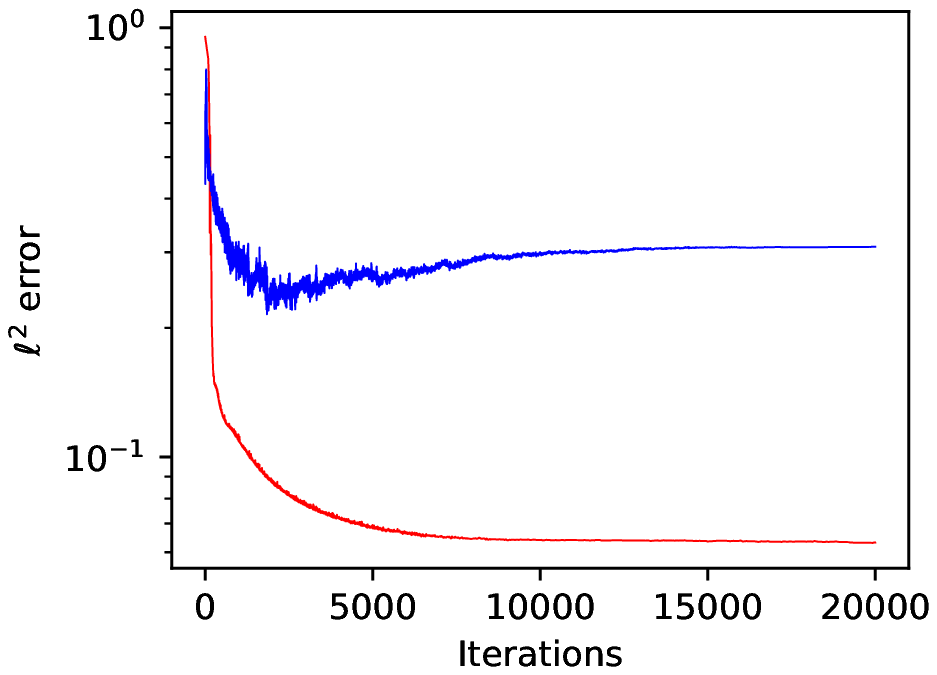}}
\caption{\em $\ell^2$ errors v.s. iterations in the hyperbolic example (Red: SelectNet model; Blue: the basic model).}
\label{Fig_case6_errors}
\end{figure}

\begin{figure}
\centering
\includegraphics[scale=0.4]{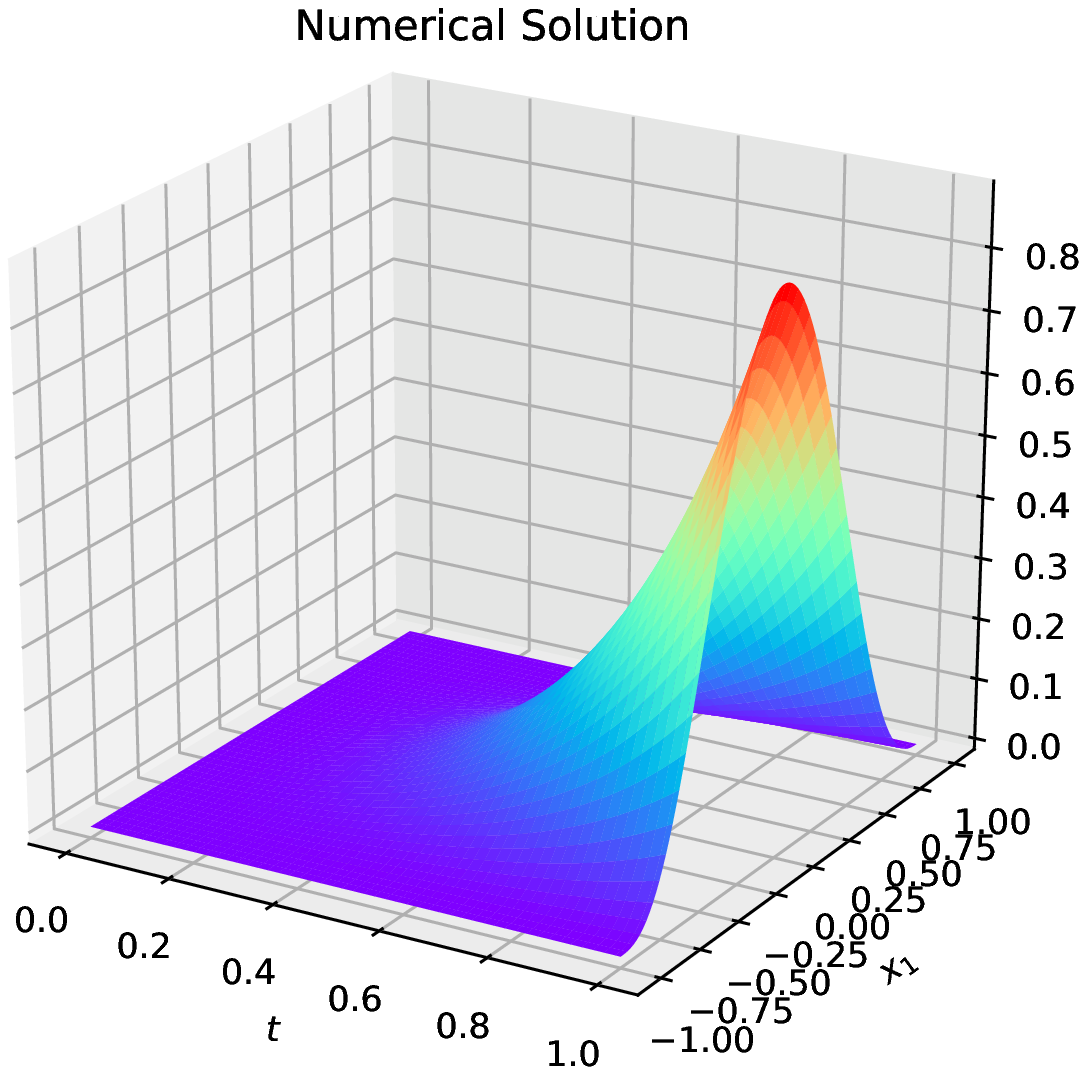}
\includegraphics[scale=0.4]{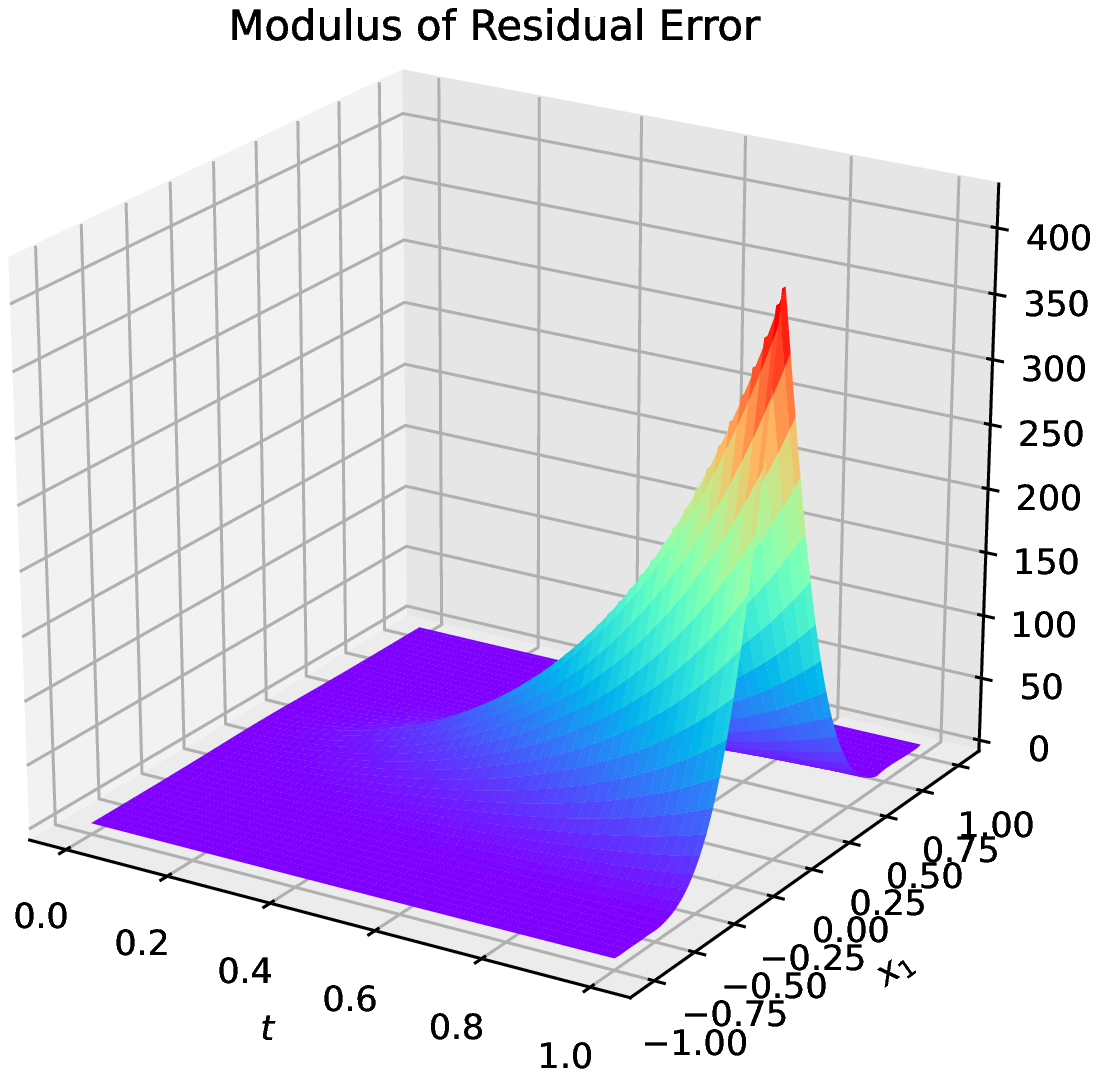}
\includegraphics[scale=0.4]{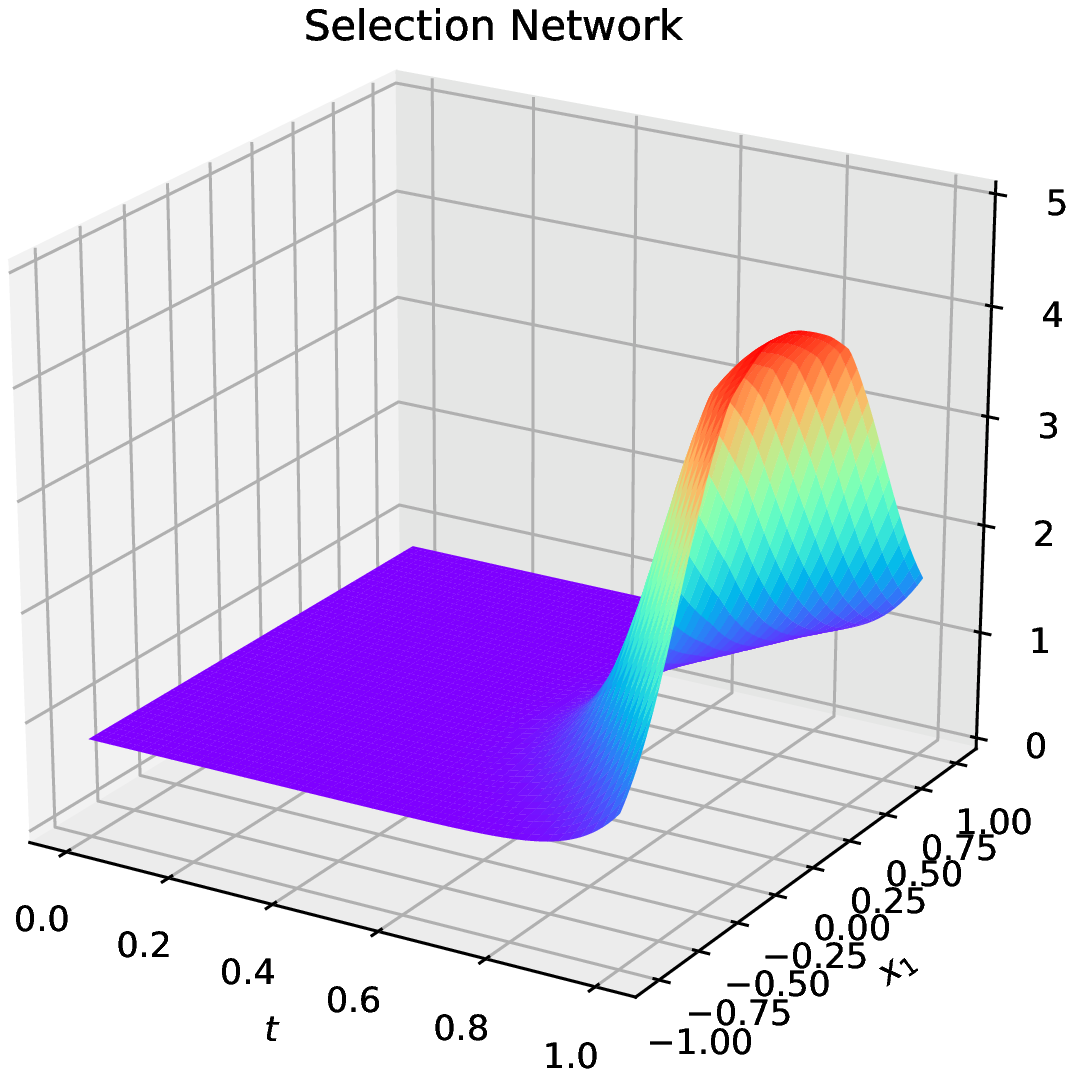}
\caption{\em The $(t,x_1)$-surfaces of the numerical solution, the modulus of residual errors and selection network by SelectNet (d=20) in the hyperbolic example.}
\label{Fig_case6_mesh}
\end{figure}

\section{Conclusion}
In this work, we improve the network-based least squares models on generic PDEs by introducing a selection network for selected sampling in the optimization process. The objective is to place higher weights on the sampling points having larger point-wise residual errors, and correspondingly we propose the SelectNet model that is a min-max optimization. In the implementation, both the solution and selection functions are approximated by feedforward neural networks, which are trained alternatively in the algorithm. The proposed SelectNet framework can solve high-dimensional PDEs that are intractable by traditional PDE solvers.

In the numerical examples, it is demonstrated the proposed SelectNet model works effectively for elliptic, parabolic, and hyperbolic equations, even if in the case of nonlinear equations. Furthermore, numerical results show that the proposed model outperforms the basic least squares model. In the problems with low-regularity solutions, SelectNet will focus on the region with larger errors automatically, finally improving the speed of convergence.

In this paper, we apply neural networks with piecewise polynomial functions as activation functions. If the floor, ReLU, Sign, and exponential functions are used as activation functions, \cite{Shen4,Shen6} showed that deep network approximation has no curse of dimensionality in the approximation error for H{\"o}lder continuous functions. But unfortunately, efficient numerical algorithms for these networks are still not available yet. It is interesting to explore the application of these networks to approximate the solutions of high-dimensional PDEs in the weak sense as future work.

{\bf Acknowledgments.} Y. G. was partially supported by the Ministry of Education in
Singapore under the grant MOE2018-T2-2-147 and MOE AcRF R-146-000-271-112. H. Y. was partially supported by the US
National Science Foundation under award DMS-1945029. C. Z was partially supported by the Ministry of Education in
Singapore under the grant MOE AcRF R-146- 000-271-112 and by NSFC under the grant award 11871364.

\bibliography{expbib}

\begin{thebibliography}{10}

\bibitem{Barron1993}
A.~R. {Barron}.
\newblock Universal approximation bounds for superpositions of a sigmoidal
  function.
\newblock {\em IEEE Transactions on Information Theory}, 39(3):930--945, May
  1993.

\bibitem{Beck2019}
Christian {Beck}, Sebastian {Becker}, Patrick {Cheridito}, Arnulf {Jentzen},
  and Ariel {Neufeld}.
\newblock {Deep splitting method for parabolic PDEs}.
\newblock {\em arXiv e-prints}, arXiv:1907.03452, Jul 2019.

\bibitem{Berg2018}
Jens Berg and Kaj Nyström.
\newblock A unified deep artificial neural network approach to partial
  differential equations in complex geometries.
\newblock {\em Neurocomputing}, 317:28 -- 41, 2018.

\bibitem{braun}
J.~Braun and M.~Griebel.
\newblock {\em On a Constructive Proof of Kolmogorov's Superposition Theorem}.
\newblock Preprint. SFB 611, 2007.

\bibitem{Cai2018}
Jian-Feng {Cai}, Dong {Li}, Jiaze {Sun}, and Ke~{Wang}.
\newblock {Enhanced Expressive Power and Fast Training of Neural Networks by
  Random Projections}.
\newblock {\em arXiv e-prints}, arXiv:1811.09054, Nov 2018.

\bibitem{Cai2019}
Wei {Cai} and Zhi-Qin~John {Xu}.
\newblock {Multi-scale Deep Neural Networks for Solving High Dimensional PDEs}.
\newblock {\em arXiv e-prints}, arXiv:1910.11710, Oct 2019.

\bibitem{Chui2018}
Charles~K. {Chui}, Shao-Bo {Lin}, and Ding-Xuan {Zhou}.
\newblock {Construction of neural networks for realization of localized deep
  learning}.
\newblock {\em Frontiers in Applied Mathematics and Statistics}, 4:14, 2018.

\bibitem{Csiba:2018:ISM:3291125.3291152}
Dominik Csiba and Peter Richt\'{a}rik.
\newblock Importance sampling for minibatches.
\newblock {\em J. Mach. Learn. Res.}, 19(1):962--982, January 2018.

\bibitem{Cybenko1989}
G.~Cybenko.
\newblock Approximation by superpositions of a sigmoidal function.
\newblock {\em Mathematics of Control, Signals and Systems}, 2(4):303--314, Feb
  1989.

\bibitem{minmax2}
Constantinos Daskalakis and Ioannis Panageas.
\newblock The limit points of (optimistic) gradient descent in min-max
  optimization.
\newblock In {\em Proceedings of the 32Nd International Conference on Neural
  Information Processing Systems}, NIPS'18, pages 9256--9266, USA, 2018. Curran
  Associates Inc.

\bibitem{Dissanayake1994}
M.~W. M.~G. Dissanayake and N.~Phan-Thien.
\newblock Neural-network-based approximations for solving partial differential
  equations.
\newblock {\em Communications in Numerical Methods in Engineering},
  10(3):195--201, 1994.

\bibitem{Duchi2011}
John Duchi, Elad Hazan, and Yoram Singer.
\newblock Adaptive subgradient methods for online learning and stochastic
  optimization.
\newblock {\em J. Mach. Learn. Res.}, 12:2121--2159, July 2011.

\bibitem{E2017}
Weinan E, Jiequn Han, and Arnulf Jentzen.
\newblock Deep learning-based numerical methods for high-dimensional parabolic
  partial differential equations and backward stochastic differential
  equations.
\newblock {\em Communications in Mathematics and Statistics}, 5(4):349--380,
  Dec 2017.

\bibitem{E2020}
Weinan E, Jiequn Han, and Linfeng Zhang.
\newblock {Integrating Machine Learning with Physics-Based Modeling}.
\newblock {\em arXiv e-prints}, arXiv:2006.02619, 2020.

\bibitem{Weinan2019APE}
Weinan E, Chao Ma, and Qingcan Wang.
\newblock A priori estimates of the population risk for residual networks.
\newblock {\em ArXiv}, abs/1903.02154, 2019.

\bibitem{Weinan2019}
Weinan E, Chao Ma, and Lei Wu.
\newblock A priori estimates of the population risk for two-layer neural
  networks.
\newblock {\em Communications in Mathematical Sciences}, 17(5):1407 -- 1425,
  2019.

\bibitem{E2019_2}
Weinan {E}, Chao {Ma}, and Lei {Wu}.
\newblock {Barron Spaces and the Compositional Function Spaces for Neural
  Network Models}.
\newblock {\em Constructive Approximation}, 2020.

\bibitem{DBLP:journals/corr/abs-1807-00297}
Weinan E and Qingcan Wang.
\newblock Exponential convergence of the deep neural network approximation for
  analytic functions.
\newblock {\em Sci. China Math.}, 61:1733--1740, 2018.

\bibitem{Ritz}
Weinan E and Bing Yu.
\newblock The deep ritz method: A deep learning-based numerical algorithm for
  solving variational problems.
\newblock {\em Commun. Math. Stat.}, 6:1--12, 2018.

\bibitem{Ehrhardt2008}
Matthias Ehrhardt and Ronald~E. Mickens.
\newblock A fast, stable and accurate numerical method for the black–scholes
  equation of american options.
\newblock {\em International Journal of Theoretical and Applied Finance},
  11(05):471--501, 2008.

\bibitem{Friedrichs}
K.~O. Friedrichs.
\newblock Symmetric positive linear differential equations.
\newblock {\em Communications on Pure and Applied Mathematics}, 11(3):333--418,
  1958.

\bibitem{Gaikwad2009}
Abhijeet Gaikwad and Ioane~Muni Toke.
\newblock Gpu based sparse grid technique for solving multidimensional options
  pricing pdes.
\newblock In {\em Proceedings of the 2Nd Workshop on High Performance
  Computational Finance}, WHPCF '09, pages 6:1--6:9, New York, NY, USA, 2009.
  ACM.

\bibitem{Gobovic1994}
D.~{Gobovic} and M.~E. {Zaghloul}.
\newblock Analog cellular neural network with application to partial
  differential equations with variable mesh-size.
\newblock In {\em Proceedings of IEEE International Symposium on Circuits and
  Systems - ISCAS '94}, volume~6, pages 359--362 vol.6, May 1994.

\bibitem{GUL}
Namig~J. Guliyev and Vugar~E. Ismailov.
\newblock Approximation capability of two hidden layer feedforward neural
  networks with fixed weights.
\newblock {\em Neurocomputing}, 316:262 -- 269, 2018.

\bibitem{Han2018}
Jiequn Han, Arnulf Jentzen, and Weinan E.
\newblock Solving high-dimensional partial differential equations using deep
  learning.
\newblock {\em Proceedings of the National Academy of Sciences},
  115(34):8505--8510, 2018.

\bibitem{Hornik1989}
Kurt Hornik, Maxwell Stinchcombe, and Halbert White.
\newblock Multilayer feedforward networks are universal approximators.
\newblock {\em Neural Networks}, 2(5):359 -- 366, 1989.

\bibitem{HJKN19_814}
M.~Hutzenthaler, A.~Jentzen, Th. Kruse, and T.~A. Nguyen.
\newblock A proof that rectified deep neural networks overcome the curse of
  dimensionality in the numerical approximation of semilinear heat equations.
\newblock Technical report, 2020.

\bibitem{Hutzenthaler2019}
Martin {Hutzenthaler}, Arnulf {Jentzen}, Thomas {Kruse}, and Tuan~Anh {Nguyen}.
\newblock {A proof that rectified deep neural networks overcome the curse of
  dimensionality in the numerical approximation of semilinear heat equations}.
\newblock {\em SN Partial Differential Equations and Applications}, 1(10),
  2020.

\bibitem{hutzenthaler2020}
Martin Hutzenthaler, Arnulf Jentzen, and von~Wurstemberger Wurstemberger.
\newblock Overcoming the curse of dimensionality in the approximative pricing
  of financial derivatives with default risks.
\newblock {\em Electron. J. Probab.}, 25:73 pp., 2020.

\bibitem{Hutzenthaler2018}
Martin Hutzenthaler, Arnulf Jentzen, and von~Wurstemberger Wurstemberger.
\newblock Overcoming the curse of dimensionality in the approximative pricing
  of financial derivatives with default risks.
\newblock {\em Electron. J. Probab.}, 25:73 pp., 2020.

\bibitem{Jiang2017}
Lu~Jiang, Zhengyuan Zhou, Thomas Leung, Li-Jia Li, and Li~Fei-Fei.
\newblock Mentornet: Learning data-driven curriculum for very deep neural
  networks on corrupted labels.
\newblock {\em Proceedings of the 35th International Conference on Machine
  Learning}, 80:2304--2313, 2018.

\bibitem{DBLP:journals/corr/abs-1803-00942}
Angelos Katharopoulos and Fran{\c{c}}ois Fleuret.
\newblock Not all samples are created equal: Deep learning with importance
  sampling.
\newblock volume~80 of {\em Proceedings of Machine Learning Research}, pages
  2525--2534, Stockholmsmässan, Stockholm Sweden, 10--15 Jul 2018. PMLR.

\bibitem{Kaufmann2010}
T.~Kaufmann, C.~Engström, and C.~Fumeaux.
\newblock Residual-based adaptive refinement for meshless eigenvalue solvers.
\newblock In {\em 2010 International Conference on Electromagnetics in Advanced
  Applications}, pages 244--247, 2010.

\bibitem{Khoo2017SolvingPP}
Yuehaw Khoo, Jianfeng Lu, and Lexing Ying.
\newblock Solving parametric pde problems with artificial neural networks.
\newblock {\em European Journal of Applied Mathematics}, page 1–15, 2020.

\bibitem{KB2014}
Diederik~P. Kingma and Jimmy Ba.
\newblock Adam: {A} method for stochastic optimization.
\newblock In Yoshua Bengio and Yann LeCun, editors, {\em 3rd International
  Conference on Learning Representations, {ICLR} 2015, San Diego, CA, USA, May
  7-9, 2015, Conference Track Proceedings}, 2015.

\bibitem{Kumar2010}
M.~P. Kumar, Benjamin Packer, and Daphne Koller.
\newblock Self-paced learning for latent variable models.
\newblock In J.~D. Lafferty, C.~K.~I. Williams, J.~Shawe-Taylor, R.~S. Zemel,
  and A.~Culotta, editors, {\em Advances in Neural Information Processing
  Systems 23}, pages 1189--1197. Curran Associates, Inc., 2010.

\bibitem{KPK2010}
M.~P. Kumar, Benjamin Packer, and Daphne Koller.
\newblock Self-paced learning for latent variable models.
\newblock In J.~D. Lafferty, C.~K.~I. Williams, J.~Shawe-Taylor, R.~S. Zemel,
  and A.~Culotta, editors, {\em Advances in Neural Information Processing
  Systems 23}, pages 1189--1197. Curran Associates, Inc., 2010.

\bibitem{Lagaris1998}
I.~E. {Lagaris}, A.~{Likas}, and D.~I. {Fotiadis}.
\newblock Artificial neural networks for solving ordinary and partial
  differential equations.
\newblock {\em IEEE Transactions on Neural Networks}, 9(5):987--1000, Sep.
  1998.

\bibitem{Lagaris2000}
I.~E. {Lagaris}, A.~C. {Likas}, and D.~G. {Papageorgiou}.
\newblock Neural-network methods for boundary value problems with irregular
  boundaries.
\newblock {\em IEEE Transactions on Neural Networks}, 11(5):1041--1049, Sep.
  2000.

\bibitem{Lee1990}
Hyuk Lee and In~Seok Kang.
\newblock Neural algorithm for solving differential equations.
\newblock {\em Journal of Computational Physics}, 91(1):110 -- 131, 1990.

\bibitem{Lee2002}
T.T. Lee, F.Y. Wang, and R.B. Newell.
\newblock Robust model-order reduction of complex biological processes.
\newblock {\em Journal of Process Control}, 12(7):807 -- 821, 2002.

\bibitem{Li2019}
Ke~{Li}, Kejun {Tang}, Tianfan {Wu}, and Qifeng {Liao}.
\newblock {D3M: A Deep Domain Decomposition Method for Partial Differential
  Equations}.
\newblock {\em IEEE Access}, 8:5283--5294, 2020.

\bibitem{doi:10.1063/1.5110439}
Qianxiao Li, Bo~Lin, and Weiqing Ren.
\newblock Computing committor functions for the study of rare events using deep
  learning.
\newblock {\em The Journal of Chemical Physics}, 151(5):054112, 2019.

\bibitem{DBLP:journals/corr/LiangS16}
Shiyu Liang and R.~Srikant.
\newblock Why deep neural networks for function approximation?
\newblock In {\em ICLR}, 2017.

\bibitem{Liu2019}
Yunru {Liu}, Tingran {Gao}, and Haizhao {Yang}.
\newblock {SelectNet: Learning to Sample from the Wild for Imbalanced Data
  Training}.
\newblock {\em Proceedings of The First Mathematical and Scientific Machine
  Learning Conference}, arXiv:1905.09872, 2020.

\bibitem{NIPS2017_7203}
Zhou Lu, Hongming Pu, Feicheng Wang, Zhiqiang Hu, and Liwei Wang.
\newblock The expressive power of neural networks: A view from the width.
\newblock In I.~Guyon, U.~V. Luxburg, S.~Bengio, H.~Wallach, R.~Fergus,
  S.~Vishwanathan, and R.~Garnett, editors, {\em Advances in Neural Information
  Processing Systems 30}, pages 6231--6239. Curran Associates, Inc., 2017.

\bibitem{MAIOROV199981}
Vitaly Maiorov and Allan Pinkus.
\newblock Lower bounds for approximation by {MLP} neural networks.
\newblock {\em Neurocomputing}, 25(1):81 -- 91, 1999.

\bibitem{Malek2006}
A.~Malek and R.~Shekari Bvolumeokhti.
\newblock Numerical solution for high order differential equations using a
  hybrid neural network-optimization method.
\newblock {\em Applied Mathematics and Computation}, 183(1):260 -- 271, 2006.

\bibitem{Montanelli2019_3}
Hadrien Montanelli and Qiang Du.
\newblock New error bounds for deep relu networks using sparse grids.
\newblock {\em SIAM Journal on Mathematics of Data Science}, 1(1), Jan 2019.

\bibitem{Montanelli2019_2}
Hadrien {Montanelli} and Haizhao {Yang}.
\newblock {Error bounds for deep ReLU networks using the Kolmogorov–Arnold
  superposition theorem}.
\newblock {\em Neural Networks}, 129:1--6, 2020.

\bibitem{Montanelli2019}
Hadrien {Montanelli}, Haizhao {Yang}, and Qiang {Du}.
\newblock {Deep ReLU networks overcome the curse of dimensionality for
  bandlimited functions}.
\newblock {\em Journal of Computational Mathematics}, To appear.

\bibitem{Nakamura2019}
Tenavi Nakamura-Zimmerer, Qi~Gong, and Wei Kang.
\newblock {Adaptive Deep Learning for High Dimensional Hamilton-Jacobi-Bellman
  Equations}.
\newblock {\em arXiv e-prints}, arXiv:1907.05317, 2019.

\bibitem{PETERSEN2018296}
Philipp Petersen and Felix Voigtlaender.
\newblock Optimal approximation of piecewise smooth functions using deep {ReLU}
  neural networks.
\newblock {\em Neural Networks}, 108:296 -- 330, 2018.

\bibitem{minmax1}
Hassan Rafique, Mingrui Liu, Qihang Lin, and Tianbao Yang.
\newblock Non-convex min-max optimization: Provable algorithms and applications
  in machine learning.
\newblock {\em ArXiv}, abs/1810.02060, 2018.

\bibitem{RAISSI2019686}
M.~Raissi, P.~Perdikaris, and G.E. Karniadakis.
\newblock Physics-informed neural networks: A deep learning framework for
  solving forward and inverse problems involving nonlinear partial differential
  equations.
\newblock {\em Journal of Computational Physics}, 378:686 -- 707, 2019.

\bibitem{Shen2019}
Zuowei {Shen}, Haizhao {Yang}, and Shijun {Zhang}.
\newblock {Deep Network Approximation Characterized by Number of Neurons}.
\newblock {\em arXiv e-prints}, arXiv:1906.05497, Jun 2019.

\bibitem{SHEN201974}
Zuowei Shen, Haizhao Yang, and Shijun Zhang.
\newblock Nonlinear approximation via compositions.
\newblock {\em Neural Networks}, 119:74 -- 84, 2019.

\bibitem{Shen4}
Zuowei Shen, Haizhao Yang, and Shijun Zhang.
\newblock Deep network with approximation error being reciprocal of width to
  power of square root of depth.
\newblock {\em arXiv:2006.12231}, 2020.

\bibitem{Shen6}
Zuowei Shen, Haizhao Yang, and Shijun Zhang.
\newblock Neural network approximation: Three hidden layers are enough.
\newblock 2020.

\bibitem{Sirignano2018}
Justin Sirignano and Konstantinos Spiliopoulos.
\newblock Dgm: A deep learning algorithm for solving partial differential
  equations.
\newblock {\em Journal of Computational Physics}, 375:1339 -- 1364, 2018.

\bibitem{minmax3}
Christopher Srinivasa, Inmar Givoni, Siamak Ravanbakhsh, and Brendan~J Frey.
\newblock Min-max propagation.
\newblock In I.~Guyon, U.~V. Luxburg, S.~Bengio, H.~Wallach, R.~Fergus,
  S.~Vishwanathan, and R.~Garnett, editors, {\em Advances in Neural Information
  Processing Systems 30}, pages 5565--5573. Curran Associates, Inc., 2017.

\bibitem{suzuki2018adaptivity}
Taiji Suzuki.
\newblock Adaptivity of deep re{LU} network for learning in besov and mixed
  smooth besov spaces: optimal rate and curse of dimensionality.
\newblock In {\em International Conference on Learning Representations}, 2019.

\bibitem{Wales2003}
David~J. Wales and Jonathan P.~K. Doye.
\newblock Stationary points and dynamics in high-dimensional systems.
\newblock {\em The Journal of Chemical Physics}, 119(23):12409--12416, 2003.

\bibitem{yarotsky2017}
Dmitry Yarotsky.
\newblock Error bounds for approximations with deep {ReLU} networks.
\newblock {\em Neural Networks}, 94:103 -- 114, 2017.

\bibitem{yarotsky18a}
Dmitry Yarotsky.
\newblock Optimal approximation of continuous functions by very deep {ReLU}
  networks.
\newblock In S\'ebastien Bubeck, Vianney Perchet, and Philippe Rigollet,
  editors, {\em Proceedings of the 31st Conference On Learning Theory},
  volume~75 of {\em Proceedings of Machine Learning Research}, pages 639--649.
  PMLR, 06--09 Jul 2018.

\bibitem{Yserentant2005}
H.~Yserentant.
\newblock Sparse grid spaces for the numerical solution of the electronic
  schr{\"o}dinger equation.
\newblock {\em Numerische Mathematik}, 101(2):381--389, Aug 2005.

\bibitem{Zang2019}
Yaohua Zang, Gang Bao, Xiaojing Ye, and Haomin Zhou.
\newblock Weak adversarial networks for high-dimensional partial differential
  equations.
\newblock {\em Journal of Computational Physics}, 411:109409, 2020.

\end{thebibliography}
\bibliographystyle{plain}

\end{document}